\def\dfrac{\displaystyle\frac}
\def\dsum{\displaystyle\sum}
\let\oldsection\section
\renewcommand\section{\setcounter{equation}{0}\oldsection}
\newtheorem{theo}{\indent Theorem}[section]
\newtheorem{lemma}{\indent Lemma}[section]
\newtheorem{rema}{\indent Remark}[section]
\newcommand{\al}{\alpha}
\newcommand{\la}{\lambda}
\newcommand{\abc}[1]{\left( #1 \right)}%
\newcommand{\abz}[1]{\left[ #1 \right]}%[]
\renewcommand{\leq}{\leqslant}
\renewcommand{\geq}{\geqslant}
\numberwithin{equation}{section}
\begin{document}

	\title{Pogorelov type $C^2$ estimates for Sum Hessian equations}
	
	\author{Pengfei Li}
	\address{School of Mathematical Science\\
		Jilin University\\ Changchun\\ China}
	\email{lipf22@mails.jlu.edu.cn}
	\author{Changyu Ren}
	\address{School of Mathematical Science\\
		Jilin University\\ Changchun\\ China} \email{rency@jlu.edu.cn}
	\begin{abstract}
		In this paper, We establish Pogorelov type $C^2$ estimates for  the admissible solutions with $\sigma_{k}(D^2u)$ bounded from below of Sum Hessian equations. We also proved the lower bounded condition can be removed when $k=n$.
	\end{abstract}
	\maketitle
	\tableofcontents

	\section{Introduction}
	In this paper, I will mainly study the Pogorelov  type $C^{2}$ estimate of the solutions to the Dirichlet problem of the following sum Hessian equation:
	\begin{equation}\label{1.1}
		\begin{cases}
			\sigma_{k - 1}(D^{2}u)+\alpha\sigma_{k}(D^{2}u)=f(x,u,Du), & \text{in }\Omega, \\
			u = 0, & \text{on }\partial\Omega.
		\end{cases}
	\end{equation}
	Where $u$ is a function defined on the domain $\Omega$, $Du$ is the gradient of $u$, $D^{2}u$ is the Hessian matrix of $u$, $\alpha\geq0$ is a constant, and $f\geq m > 0$ is a given smooth function. $\sigma_{k}(D^{2}u)$ represents the $k$-th elementary symmetric polynomial of the eigenvalues of the matrix $D^{2}u$. That is, for $\lambda = (\lambda_{1},\cdots,\lambda_{n})\in\mathbb{R}^{n}$,
	\[
	\sigma_{k}(\lambda)=\sum_{1\leq i_{1}<\cdots <i_{k}\leq n}\lambda_{i_{1}}\cdots\lambda_{i_{k}}.
	\]

	When $\alpha=0$, the equation \eqref{1.1} is classical $k$-Hessian equation, that is
	\begin{equation}\label{1.2}
		\sigma_{k}(D^{2}u)=f(x,u,Du),\quad x\in\Omega.
	\end{equation}
	The equation \eqref{1.2} and the following curvature equation
	\begin{equation}\label{1.3}
		\sigma_{k}(\kappa(X))=\psi(X,\nu),\quad X\in M,
	\end{equation}
	are important research contents in the fields of fully nonlinear partial differential equations and geometric analysis. When $k = 1$, equations \eqref{1.2} and \eqref{1.3} are respectively the semilinear elliptic equation and the prescribed mean curvature equation. When $k = 2$, equation \eqref{1.3} is the prescribed scalar curvature equation. When $k = n$, equations \eqref{1.2} and \eqref{1.3} are respectively the Monge - Ampère equation and the prescribed Gaussian curvature equation. For general $k$, $\sigma_{k}(\kappa(X))$ represents the Weingarten curvature at $X$.
	
	An important problem in the study of the $k$-Hessian equation \eqref{1.2} and the curvature equation is how to obtain the $C^{2}$ estimate and the curvature estimate of the solution. There are many studies on this aspect, such as references \cite{CNS1, CNS3, GB1, JL, GG, Ivo2, Ivo1, GLL, GLM, Y1, GRW, Chu, SX, RW1, RW2, Lu}.

	The Pogorelov  type $C^{2}$ estimate is an interior $C^{2}$ estimate with boundary information. Pogorelov first established this estimate for the Monge-Ampère equation \cite{P1}. Liu-Trudinger \cite{LT} and Jiang-Trudinger \cite{JT} established the Pogorelov  type estimate for more general Monge-Ampère type equations. Yuan \cite{YY} used the monotonicity method to prove the interior estimate and the Pogorelov  type estimate of the Monge-Ampère equation. Sheng-Urbas-Wang \cite{SUW} established the Pogorelov  type estimate for a large class of curvature equations including the Hessian equation. Chou-Wang \cite{CW} and Wang \cite{Wang} established the Pogorelov  type estimate for the $k$-convex solution of the equation
	\[
	\begin{cases}
		\sigma_{k}(D^{2}u)=f(x,u), & \text{in }\Omega, \\
		u = 0, & \text{on }\partial\Omega.
	\end{cases}
	\]
	that is,
	\[
	(-u)^{1+\varepsilon}\Delta u\leq C,
	\]
	where $\varepsilon > 0$ can be an arbitrarily small positive number. When the right hand function of  this equation depends on the gradient, that is,
	\begin{equation}\label{1.4}
		\begin{cases}
			\sigma_{k}(D^{2}u)=f(x,u,Du), & \text{in }\Omega, \\
			u = 0, & \text{on }\partial\Omega.
		\end{cases}
	\end{equation}
	Li-Ren-Wang \cite{LiRW} established the Pogorelov  type estimate for the $k + 1$-convex solution
	\[
	(-u)\Delta u\leq C.
	\]
	In particular, reference \cite{LiRW} also proved that for the 2-convex solution of the $\sigma_{2}$ equations, there is a Pogorelov type estimate
	\begin{equation}\label{1.5}
		(-u)^{\beta}\Delta u\leq C,
	\end{equation}
	especially, the power $\beta$ of $u$ depends on $\sup\limits_{\Omega}|Du|$. Chen-Xiang\cite{CX} further proved that if $f = 1$ and
	$
	\sigma_{3}(D^{2}u) > - A,$
	then the power $\beta$ can be independent of $\sup\limits_{\Omega}|Du|$ for the $\sigma_{2}$ equations . Chen-Tu-Xiang \cite{CTX} established the Pogorelov  type estimate of the form \eqref{1.5} for the semi-convex admissible solution for the equation \eqref{1.4}. Jiao \cite{JJ} studied the Pogorelov estimate of the degenerate curvature equations.

	In addition to the Hessian equation, the Hessian-type equation has also attracted extensive attention, including the sum Hessian equation, the Hessian quotient equation, etc. The sum Hessian operator refers to the linear combination of Hessian operators of various orders. For example, the following equation proposed by Harvey and Lawson \cite{HLB} in the study of minimal submanifold problems
	$$
	{\rm Im}\det(\delta_{ij}+iu_{ij})=\sum_{k = 0}^{[(n - 1)/2]}(- 1)^{k}\sigma_{2k + 1}(\lambda(u_{ij})) = 0.
	$$
	Krylov \cite{K1} and Dong \cite{D1} studied the following nonlinear equation
	\[
	P_{m}(u_{ij})=\sum_{k = 0}^{m - 1}(l_{k}^{+})^{m - k}(x)\sigma_{k}(u_{ij})=g^{m - 1}(x).
	\]
	Li-Ren-Wang \cite{LRW} studied the concavity of the operators $\sum_{s = 0}^{k}\alpha_{s}\sigma_{s}$ and $\sigma_{k}+\alpha\sigma_{k - 1}$, and established the curvature estimate of the convex solution of the following sum Hessian equation
	\[
	\sum_{s = 0}^{k}\alpha_{s}\sigma_{s}(\kappa(X))=f(X,\nu),\quad X\in M.
	\]
	Guan-Zhang \cite{GZ}, Chen-Lu-Tu-Xiang \cite{CLTX}, Sheng-Xia \cite{SXia} and Zhou \cite{Zhou2} also studied other types of sum Hessian equations.
	 Liu-Ren \cite{LR} studied the sum Hessian equation of the form \eqref{1.1},
	when $k = 2,3$, the $k - 1$ convex solutions has Pogorelov  type estimates, but the power of $u$ is very large. If the right hand function $f^{\frac{1}{k}}(x,u,p)$ is convex with respect to $p$, the power can be reduced to $1+\varepsilon$. If the convexity is strengthened to $k$-convex, the power can be completely reduced to 1. 
	
	For other forms of Hessian-type equations, 
	References \cite{CDH,CTX2,LMZ,Mei,Zhou1} studied the Hessian quotient type equation.
	References \cite{Bao,HPX,HSX} studied the parabolic $k$-Hessian equation and established the Pogorelov  type estimates. 	Chu-Jiao \cite{JC} and Dong\cite{DWS2,DWS1} studied the global $C^{2}$ estimate and the Pogorelov  type estimate of the other Hessian type equations.
	
	As we all know, the admissible solution of the $\sigma_{k}$ equation is the G$\mathring{a}$rding cone
	$$\Gamma_k=\left\{\lambda\in\mathbb{R}^n|\sigma_1(\lambda)>0,\cdots,\sigma_k(\lambda)>0\right\}.$$
	We denote
	\begin{equation*}
		S_k(\la):=\sigma_{k-1}(\la)+\alpha\sigma_{k}(\la).
	\end{equation*}
	In Li-Wang-Ren\cite{LRW}, the author
	proved that the admissible solution set of  the sum Hessian equation \eqref{1.1} is
	$$\left\{\lambda\in\mathbb{R}^n|S_1(\lambda)>0,\cdots,S_k(\lambda)>0\right\}.$$
	According to reference \cite{LR}, the set is equal to this set 
	$$
	\tilde\Gamma_k=\Gamma_{k-1}\cap\{\la\in\mathbb{R}^n|S_k(\lambda)>0\}.
	$$
	The main results in the paper are as follows:
	\begin{theo}
		Let $\Omega$ be a bounded domain in $\mathbb{R}^n$, $f(x, u, p) \in C^2(\overline{\Omega} \times \mathbb{R} \times \mathbb{R}^n)$ and $f\ge m>0$, $u \in C^4(\Omega) \cap C^2(\overline{\Omega})$ is a $k - 1$ convex solution of the equation \eqref{1.1}.
	 If there exists a constant $G>0$ such that $\sigma_k(D^2u)\geq - G$, then
		\[
		(-u)^\beta\Delta u\leq C.
		\]
		Here the constants $\beta$ and $C$ depend on $n, k, \Omega, \alpha, G,\|u\|_{C^1}, f$.
		\end{theo}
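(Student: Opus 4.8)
The plan is to run the standard Pogorelov maximum-principle argument, adapted to the sum Hessian operator $S_k$ and to the gradient dependence of the right-hand side. First, since $u$ is $k-1$ convex, $\lambda(D^2u)\in\tilde\Gamma_k=\Gamma_{k-1}\cap\{S_k>0\}$, so $\Delta u=\sigma_1(D^2u)>0$; hence $u$ is subharmonic and, as $u=0$ on $\partial\Omega$, $u<0$ in $\Omega$. From $S_k(D^2u)=\sigma_{k-1}(D^2u)+\alpha\sigma_k(D^2u)=f$ together with $\sigma_k(D^2u)\geq-G$ one gets the a priori bounds $0<\sigma_{k-1}(D^2u)\leq\sup_\Omega f+\alpha G$, and (when $\alpha>0$) $\sigma_k(D^2u)\leq\alpha^{-1}\sup_\Omega f$; this is the only place the hypothesis $\sigma_k\geq-G$ is used. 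I would then study
\[
\varphi=\beta\log(-u)+\log\lambda_{\max}(D^2u)+\tfrac a2|Du|^2,
\]
with $a,\beta>0$ to be fixed at the end depending only on the admissible data. Since $u\in C^2(\overline\Omega)$, $\lambda_{\max}$ is bounded on $\overline\Omega$ while $\log(-u)\to-\infty$ at $\partial\Omega$, so $\varphi$ attains an interior maximum at some $x_0$; crucially, the bound obtained for $\varphi(x_0)$ will not involve $\|u\|_{C^2(\Omega)}$, so once $\varphi\leq C$ on $\Omega$ we are done via $(-u)^\beta\Delta u\leq n(-u)^\beta\lambda_{\max}\leq C$.

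At $x_0$ I would rotate so that $D^2u=\mathrm{diag}(\lambda_1,\dots,\lambda_n)$, $\lambda_1=\lambda_{\max}\geq\lambda_2\geq\cdots\geq\lambda_n$ (assuming $\lambda_1$ arbitrarily large, else done; replacing $\lambda_{\max}$ by $u_{11}$ near $x_0$, with the usual perturbation if $\lambda_1$ is not simple). With $F^{ij}=\partial S_k/\partial u_{ij}$ and $F^{ij,rs}=\partial^2 S_k/\partial u_{ij}\partial u_{rs}$ — diagonal at $x_0$, $F^{ii}=\sigma_{k-2}(\lambda|i)+\alpha\sigma_{k-1}(\lambda|i)>0$, $F^{11}=\min_i F^{ii}$, $F^{nn}=\max_i F^{ii}$ — the first-order condition is $u_{11i}/u_{11}=-\beta u_i/u-a\lambda_i u_i$ and $0\geq\sum_i F^{ii}\varphi_{ii}$ becomes
\begin{align*}
0\ \geq\ & \frac{1}{u_{11}}\sum_i F^{ii}u_{11ii}-\frac1{u_{11}^2}\sum_i F^{ii}u_{11i}^2 +\frac\beta u\sum_i F^{ii}\lambda_i-\frac\beta{u^2}\sum_i F^{ii}u_i^2\\
&+a\sum_i F^{ii}\lambda_i^2+a\sum_l u_l\sum_i F^{ii}u_{iil}.
\end{align*}
Differentiating the equation once and twice (no curvature terms in $\mathbb{R}^n$) gives $\sum_iF^{ii}u_{iil}=f_{x_l}+f_uu_l+f_{p_l}\lambda_l$ and $\sum_iF^{ii}u_{11ii}=\partial_{11}f-\sum F^{ij,rs}u_{ij1}u_{rs1}$, where $\partial_{11}f=f_{p_1p_1}\lambda_1^2+\sum_l f_{p_l}u_{l11}+O(1+\lambda_1)$ and $\sum_l f_{p_l}u_{l11}=-u_{11}\sum_l f_{p_l}(\beta u_l/u+a\lambda_l u_l)$ by the first-order relation.

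After substituting, the term $-\sum F^{ij,rs}u_{ij1}u_{rs1}$ is handled by the concavity of $S_k$ on $\tilde\Gamma_k$ established in Li--Ren--Wang and Liu--Ren, which also yields the nonnegative off-diagonal gain $2\sum_{i\geq2}\frac{F^{ii}-F^{11}}{\lambda_1-\lambda_i}u_{11i}^2$ (nonnegative because $F^{11}$ is smallest and $\lambda_1$ largest). One then splits $\sum_i$ into $\{i:\lambda_i>\delta\lambda_1\}$, where this concavity inequality absorbs the matching part of $-u_{11}^{-2}\sum F^{ii}u_{11i}^2$, and $\{i:\lambda_i\leq\delta\lambda_1\}$, where the first-order relation expresses $u_{11i}/u_{11}$ through $\beta/(-u)$ and $\lambda_i$. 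The single good term of the right strength is $a\sum_iF^{ii}\lambda_i^2$, and I would choose $a\leq(2\sup_\Omega|Du|^2)^{-1}$ so it dominates the competing $-2a^2\sum_iF^{ii}\lambda_i^2u_i^2$ produced by the first-order relation; then, choosing $\beta$ large (depending on $\sup|Du|$) to beat the remaining $\beta$- and $\beta^2$-terms and the $O(\lambda_1)$ terms, the inequality forces $\lambda_1(x_0)\leq C(-u(x_0))^{-\beta}$, hence $\varphi\leq C$ on $\Omega$.

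The main obstacle — and the reason both for the hypothesis $\sigma_k\geq-G$ and for the dependence of $\beta,C$ on $\|u\|_{C^1}$ — is twofold. When $k<n$, the concavity of $S_k$ (like that of $\sigma_{k-1}$) controls only the fraction $\tfrac{k-2}{k-1}<1$ of the square $(\sum F^{ii}u_{ii1})^2/S_k$, unlike $\log\det$ for Monge--Ampère which controls all of it; closing this deficit, together with controlling the genuinely gradient-generated terms $f_{p_1p_1}\lambda_1^2$ (from $\partial_{11}f$) and $a\sum_l f_{p_l}\lambda_l u_l$, is exactly where the a priori bounds on $\sigma_{k-1}$, hence on $\sigma_k$, coming from $\sigma_k\geq-G$ are needed — and this is why $k=n$ requires no such hypothesis and is treated separately. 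Second, since $-u$ may be small at $x_0$, the negative terms $-\tfrac\beta{u^2}\sum F^{ii}u_i^2$ and the $\beta^2$-part of $-u_{11}^{-2}\sum F^{ii}u_{11i}^2$ cannot be discarded, and balancing them is what forces $\beta$ to be large. I expect essentially all the difficulty to be concentrated in these two balances; the remainder is by-now-standard Pogorelov bookkeeping.
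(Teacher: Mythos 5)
Your outline reproduces the standard Pogorelov scaffolding (test function, first- and second-order conditions at the maximum, differentiating the equation twice, absorbing the gradient terms), and that part matches the paper. But the two places where you yourself locate "the main obstacle" are exactly where the proposal is not a proof, and your account of how the hypothesis $\sigma_k(D^2u)\ge -G$ enters is wrong. In the paper this hypothesis is used in two essential ways, neither of which is "bounding $\sigma_{k-1}$ from above": first, via Lemma \ref{lem2.6}, the combination $S_k\le F$ and $\sigma_k\ge -G$ yields a quantitative lower bound $\lambda_n\ge -K$, and this constant $K$ is what controls all the negative products $\sigma_k(\lambda|1)$, $\sigma_{k-1}(\lambda|1)$ appearing in front of $u_{111}^2/\lambda_1^3$ in \eqref{4.14} and \eqref{4.17}; second, the argument is a dichotomy over spectral gaps, and in the terminal case $\lambda_i>\delta_i\lambda_1$ for all $i\le k-1$ one needs $S_k\ge \lambda_1\cdots\lambda_{k-2}(\delta_{k-1}\lambda_1-CK)-G\alpha\gtrsim\lambda_1^{k-1}$, which bounds $\lambda_1$ directly from the equation. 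An upper bound on $\sigma_{k-1}$ alone does not bound $\lambda_1$, because without control of the negative eigenvalues $\lambda_1\cdots\lambda_{k-1}$ large is compatible with $\sigma_{k-1}$ small.

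The second gap is that you invoke "the concavity of $S_k$ on $\tilde\Gamma_k$ established in Li--Ren--Wang and Liu--Ren" to absorb $-\sum F^{ij,rs}u_{ij1}u_{rs1}$ and the matching part of $-u_{11}^{-2}\sum F^{ii}u_{11i}^2$, but plain concavity is insufficient here: the coefficient of $u_{111}^2/\lambda_1^2$ it produces is too small by exactly the deficit you mention. The paper's actual engine is the new gap-dependent concavity inequality (Lemma \ref{3.4}): under the hypotheses $\lambda_l\ge\delta\lambda_1$ and $\lambda_{l+1}\le\delta'\lambda_1$ one gets the improved coefficient $1+\vartheta-\delta\vartheta-\epsilon>1$ together with a controllable error $-\delta_0\sum_{i>l}S_k^{ii}\xi_i^2/(\lambda_1 S_k)$, and Lemma \ref{lem4.2} is then iterated over $l=1,\dots,k-1$; the case where no gap exists at any level is precisely the terminal case resolved by $\sigma_k\ge -G$. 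Your single split into $\{\lambda_i>\delta\lambda_1\}$ versus $\{\lambda_i\le\delta\lambda_1\}$ does not substitute for this iteration, and without the refined inequality the estimate does not close. (A smaller point: you propose $a\le(2\sup|Du|^2)^{-1}$, whereas the paper needs $a$ large so that $\frac{a}{4}S_k^{11}u_{11}^2\ge\frac{a\theta m}{4}\lambda_1$ dominates the $C\lambda_1$ error terms, absorbing the quadratic-in-$a$ losses by taking $\beta\ge 2C_1a$ and $A=a^3$; with $a$ small the final absorption of the $O(\lambda_1)$ terms fails.)
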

	\begin{theo}
			Under the conditions of Theorem 1, if $k=n$, then for all $n-1$ convex solutions of equation \eqref{1.1} (without the condition that $\sigma_n$ is bounded from below), we have
		\[
		(-u)^\beta\Delta u\leq C.
		\]
		Here the constants $\beta$ and $C$ depend on $n, k, \Omega, \alpha, \|u\|_{C^1}, f$.
	\end{theo}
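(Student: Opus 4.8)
The plan is to prove Theorem 2 by running the same maximum–principle scheme as in the proof of Theorem 1, and to show that when $k=n$ the place in that argument where the lower bound $\sigma_k\geq -G$ was used can be replaced by an unconditional estimate coming from the algebra of $\sigma_{n-1}$, $\sigma_n=\det D^2u$ and the equation itself. As usual I fix a large constant $a=a(n,\alpha,\|u\|_{C^1},f)$ and an exponent $\beta$ to be chosen, and consider
\[
\Phi=\beta\log(-u)+\log\lambda_{\max}(D^2u)+\tfrac{a}{2}|Du|^2,
\]
with $\lambda_{\max}$ smoothed near the relevant point by the standard perturbation of the top eigenvalue. Since $u=0$ on $\partial\Omega$, $\Phi\to-\infty$ near $\partial\Omega$, so $\Phi$ attains its maximum at an interior point $x_0$; after a rotation I may take $D^2u(x_0)$ diagonal with $\lambda_1\geq\cdots\geq\lambda_n$ and $\lambda_1$ as large as needed (otherwise there is nothing to prove). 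Writing $F=S_n=\sigma_{n-1}+\alpha\sigma_n$ and $F^{ij}=\partial F/\partial u_{ij}$, admissibility means $\lambda(D^2u)\in\tilde\Gamma_n$, so $\sigma_1,\dots,\sigma_{n-1}>0$, the $F^{ij}$ are positive, and \emph{at most one} eigenvalue --- necessarily $\lambda_n$ --- is negative. At $x_0$ one has the first-order identities $\Phi_i=0$ (equivalently $u_{11i}=-\lambda_1u_i(\beta/u+a\lambda_i)$) and $\sum_iF^{ii}\Phi_{ii}\leq 0$.

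Expanding $\sum_iF^{ii}\Phi_{ii}$ produces the standard collection of terms: the contribution of $\tfrac{a}{2}|Du|^2$ (handled by differentiating the equation once); the term $\lambda_1^{-1}\sum_iF^{ii}(\lambda_1)_{ii}$, handled by differentiating the equation twice in $e_1$ and invoking the concavity of $S_n$ on $\tilde\Gamma_n$ from \cite{LRW,LR} (which makes the second-variation remainder nonnegative), together with the nonnegative term $2\lambda_1^{-1}\sum_iF^{ii}\sum_{p>1}(\lambda_1-\lambda_p)^{-1}u_{1pi}^2$; the terms $\beta u^{-1}\sum_iF^{ii}u_{ii}$ and $-\beta u^{-2}\sum_iF^{ii}u_i^2$ from $\beta\log(-u)$; and the leading positive term $a\sum_iF^{ii}\lambda_i^2$. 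One then rearranges and eliminates the third-order quantities using the first-order relations and a Maclaurin-type inequality for $S_n$, exactly as in Theorem 1. The only ingredient of that argument which in Theorem 1 uses $\sigma_k\geq -G$ is the coercivity of the leading term, namely an estimate of the form $\sum_iF^{ii}\lambda_i^2\geq c_0\lambda_1$ for $\lambda_1$ large; so the crux of Theorem 2 is to prove this coercivity when $k=n$ with no lower bound on $\sigma_n$.

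This is where $k=n$ enters, and it is the step I expect to be the main obstacle. Because $\sigma_{n+1}\equiv 0$, the identity $\sum_i\lambda_i^2\partial_{\lambda_i}\sigma_m=\sigma_1\sigma_m-(m+1)\sigma_{m+1}$ yields the clean formula
\[
\sum_iF^{ii}\lambda_i^2=(\sigma_1\sigma_{n-1}-n\sigma_n)+\alpha\sigma_1\sigma_n=\sigma_1 f-n\sigma_n,
\]
whereas for $k<n$ the analogous formula carries the extra, uncontrolled term $-\alpha(k+1)\sigma_{k+1}$, which is precisely why Theorem 1 needs $\sigma_k\geq -G$. Now I split cases. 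If $\alpha>0$: writing $\lambda_n=-t$, the equation becomes $(1-\alpha t)\,\lambda_1\cdots\lambda_{n-1}-t\,\sigma_{n-2}(\lambda_1,\dots,\lambda_{n-1})=f>0$, which forces $1-\alpha t>0$, i.e. $\lambda_n>-1/\alpha$, so $\sigma_1\geq\lambda_1-1/\alpha$; and $\sigma_n=(f-\sigma_{n-1})/\alpha<f/\alpha$ by admissibility, hence $\sum_iF^{ii}\lambda_i^2>\lambda_1f-(n+1)f/\alpha\geq\tfrac{1}{2}m\lambda_1$ for $\lambda_1$ large. If $\alpha=0$ the equation is $\sigma_{n-1}(D^2u)=f$: when $\lambda_n\geq0$, Maclaurin's inequality gives $\sigma_n\leq(f/n)^{n/(n-1)}$ while $\sigma_1\geq\lambda_1$, so again $\sum_iF^{ii}\lambda_i^2\geq\tfrac{1}{2}m\lambda_1$ for $\lambda_1$ large; when $\lambda_n<0$, the term of $\sigma_{n-1}$ omitting $\lambda_n$ gives $\lambda_1\cdots\lambda_{n-1}\geq f$, so $-n\sigma_n=n|\lambda_n|\lambda_1\cdots\lambda_{n-1}\geq nf|\lambda_n|$ and $\sum_iF^{ii}\lambda_i^2\geq f(\sigma_1+n|\lambda_n|)=f(\lambda_1+\cdots+\lambda_{n-1}+(n-1)|\lambda_n|)\geq f\lambda_1$. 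In all cases $\sum_iF^{ii}\lambda_i^2\geq c_0\lambda_1$ once $\lambda_1\geq R_0(n,\alpha,m,\sup f)$, with no use of any lower bound for $\sigma_n$.

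Feeding this coercivity back into $0\geq\sum_iF^{ii}\Phi_{ii}$ and choosing $\beta$ large (depending on $n,\alpha,\|u\|_{C^1},f$), the rest of the argument is identical to Theorem 1: the good term $a\sum_iF^{ii}\lambda_i^2\geq ac_0\lambda_1$ absorbs the remaining terms --- in particular those carrying factors $1/(-u)$ or $1/(-u)^2$, which are dealt with by the customary dichotomy (either $\lambda_1(-u)^{\beta}$ is already below a fixed constant, or $\lambda_1(-u)$ is so large that the good term wins) --- leaving $0\geq c\,a\,\lambda_1-C$ at $x_0$, hence $\lambda_1(x_0)\leq C$. Consequently $(-u)^{\beta}\lambda_{\max}(D^2u)$ is bounded at its maximum point, hence on $\overline\Omega$, and therefore $(-u)^{\beta}\Delta u\leq C$ with $\beta$ and $C$ depending only on $n,\alpha,\Omega,\|u\|_{C^1},f$, as claimed. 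The genuinely new and delicate point is the coercivity of the preceding paragraph; the remainder is bookkeeping parallel to Theorem 1.
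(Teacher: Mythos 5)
Your proposal misidentifies where the hypothesis $\sigma_k\geq -G$ actually enters the proof of Theorem 1, and as a result the new ingredient you supply (the coercivity $\sum_iF^{ii}\lambda_i^2=\sigma_1f-n\sigma_n\geq c_0\lambda_1$) solves a non-problem. In the paper's argument the good zeroth-order term is simply $\tfrac{a}{2}S_k^{11}u_{11}^2\geq\tfrac{a\theta f}{2}\lambda_1$ via Lemma \ref{lem2.3}(ii), which never uses a lower bound on $\sigma_k$. The hypothesis $\sigma_k\geq -G$ is used in two quite different places: (i) in Lemma \ref{lem2.6}, to get $\lambda_n\geq -K$, which is needed to bound the genuinely signed term $S_k(\lambda|1)u_{111}^2/\lambda_1^3$ from below (here $S_k(\lambda|1)$ can be negative); and (ii) at the very end of Section 4, because the concavity inequality (Lemma \ref{3.4}) used to absorb the bad third-order term $-S_k^{11}u_{111}^2/\lambda_1^2$ only applies under the dichotomy $\lambda_l\geq\delta\lambda_1$, $\lambda_{l+1}\leq\delta'\lambda_1$, and the leftover case $\lambda_i>\delta_i\lambda_1$ for all $i\leq k-1$ is closed by invoking $\sigma_k\geq -G$ directly. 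Your sentence ``one then rearranges and eliminates the third-order quantities \ldots exactly as in Theorem 1'' therefore conceals exactly the step that fails without the lower bound on $\sigma_n$: mere concavity of $S_n$ on $\widetilde\Gamma_n$ controls $-\sum S_n^{ij,rs}u_{ij1}u_{rs1}$ up to $(f_1)^2/f$, but it does not touch the term $-S_n^{11}u_{111}^2/\lambda_1^2$ coming from $-(\lambda_1)_1^2/\lambda_1^2$, which is the classical obstruction for Pogorelov estimates with gradient-dependent right-hand side.

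What the paper actually does for $k=n$ is different on both counts: the bound $\lambda_i>-1/\alpha$ is obtained for free from $S_n^{11,\ldots,n-1,n-1}=1+\alpha\lambda_n>0$ on $\widetilde\Gamma_n$ (replacing Lemma \ref{lem2.6}), and, more importantly, the whole of Section 3.2 is devoted to a new quantitative concavity inequality for $S_n$ (Lemma \ref{lem3.3}, resting on the algebraic identities of Lemma \ref{lem3.1} and the positivity of the quadratic form in Lemma \ref{lem3.2}), which dominates $S_n^{11}\xi_1^2$ outright, with no case splitting and no lower bound on $\sigma_n$. Your write-up contains no substitute for this inequality. The coercivity identity you prove is correct and the deduction $\lambda_n>-1/\alpha$ for $\alpha>0$ agrees with the paper, but without an argument for the third-order terms the proof does not close.
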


	\section{Preliminary}
	\par
	In this section, we list some useful preliminary knowledge.
	\begin{lemma}\label{lem2.1}
		Let $\lambda=(\lambda_1,\cdots,\lambda_n)\in
		\mathbb{R}^n$, we have\\
		\par
		(i) $S_k^{pp}(\lambda):=\dfrac{\partial
			S_k(\lambda)}{\partial\lambda_p}=S_{k-1}(\lambda|p)$,
		\quad $p=1,2,\cdots,n$;
		\par
		(ii)
		$S_k^{pp,qq}(\lambda):=\dfrac{\partial^2S_k(\lambda)}{\partial\lambda_p\partial\lambda_q}=S_{k-2}(\lambda|pq)$,
		\quad $p,q=1,2,\cdots,n$, \quad and $S_k^{pp,pp}(\la)=0$;
		\par
		(iii) $S_k(\lambda)=\lambda_iS_{k-1}(\lambda|i)+S_k(\lambda|i),$
		$i=1,\cdots,n$;
		\par
		(iv)
		$\dsum_{i=1}^nS_k(\lambda|i)=(n-k)S_k(\lambda)+\sigma_{k-1}(\lambda);$
		\par
		(v)
		$\dsum_{i=1}^n\lambda_iS_{k-1}(\lambda|i)=kS_k(\lambda)-\sigma_{k-1}(\lambda).$
	\end{lemma}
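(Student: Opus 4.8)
The plan is to deduce all five identities from the classical formulas for the elementary symmetric functions $\sigma_m$ together with the linearity of $S_k=\sigma_{k-1}+\alpha\sigma_k$ in $\sigma_{k-1}$ and $\sigma_k$. Throughout, $(\lambda|i)$ and $(\lambda|pq)$ denote the vectors obtained from $\lambda$ by deleting the indicated entries, and I write $S_j(\lambda|i):=\sigma_{j-1}(\lambda|i)+\alpha\sigma_j(\lambda|i)$ and $S_j(\lambda|pq):=\sigma_{j-1}(\lambda|pq)+\alpha\sigma_j(\lambda|pq)$. The four standard facts I will use are: $\dfrac{\partial\sigma_m(\lambda)}{\partial\lambda_p}=\sigma_{m-1}(\lambda|p)$; the one-step expansion $\sigma_m(\lambda)=\sigma_m(\lambda|i)+\lambda_i\sigma_{m-1}(\lambda|i)$; the summation formula $\sum_{i=1}^n\sigma_m(\lambda|i)=(n-m)\sigma_m(\lambda)$; and $\sum_{i=1}^n\lambda_i\sigma_{m-1}(\lambda|i)=m\sigma_m(\lambda)$. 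Each of these is elementary, obtained by counting in how many summands of $\sigma_m$ a given monomial occurs.

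For (i), differentiate $S_k=\sigma_{k-1}+\alpha\sigma_k$ termwise: $\partial_{\lambda_p}S_k=\sigma_{k-2}(\lambda|p)+\alpha\sigma_{k-1}(\lambda|p)=S_{k-1}(\lambda|p)$. Part (ii) follows by differentiating (i) once more in $\lambda_q$; since $\sigma_m(\lambda|p)$ does not involve $\lambda_p$, we obtain $S_{k-2}(\lambda|pq)$ when $q\ne p$ and $0$ when $q=p$, which in particular gives $S_k^{pp,pp}(\lambda)=0$. For (iii), substitute the one-step expansion into $S_k=\sigma_{k-1}+\alpha\sigma_k$ and collect the terms containing the factor $\lambda_i$: the coefficient of $\lambda_i$ is $\sigma_{k-2}(\lambda|i)+\alpha\sigma_{k-1}(\lambda|i)=S_{k-1}(\lambda|i)$ and the remaining part is $\sigma_{k-1}(\lambda|i)+\alpha\sigma_k(\lambda|i)=S_k(\lambda|i)$.

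For (iv), sum the defining relation $S_k(\lambda|i)=\sigma_{k-1}(\lambda|i)+\alpha\sigma_k(\lambda|i)$ over $i$ and apply $\sum_i\sigma_m(\lambda|i)=(n-m)\sigma_m(\lambda)$ with $m=k-1$ and $m=k$, obtaining $(n-k+1)\sigma_{k-1}(\lambda)+\alpha(n-k)\sigma_k(\lambda)$; splitting off one copy of $\sigma_{k-1}$ rewrites this as $(n-k)S_k(\lambda)+\sigma_{k-1}(\lambda)$. For (v), use $\sum_i\lambda_i\sigma_{m-1}(\lambda|i)=m\sigma_m(\lambda)$ with $m=k-1$ and $m=k$ to get $\sum_i\lambda_iS_{k-1}(\lambda|i)=(k-1)\sigma_{k-1}(\lambda)+\alpha k\sigma_k(\lambda)=kS_k(\lambda)-\sigma_{k-1}(\lambda)$; alternatively this is Euler's relation for the non-homogeneous polynomial $S_k$ combined with (i). No step presents a genuine difficulty; the only care needed is bookkeeping of the deleted indices and handling the degenerate diagonal case $p=q$ in (ii).
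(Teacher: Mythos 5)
Your proof is correct and complete: all five identities follow exactly as you say from the linearity $S_k=\sigma_{k-1}+\alpha\sigma_k$ and the four classical $\sigma_m$ identities, and the coefficient bookkeeping in (iv) and (v) checks out. The paper itself gives no argument here (it only cites the reference \cite{LR} for these properties), and your derivation is the standard one that any such reference would use, so there is nothing to compare beyond noting that you have supplied the proof the paper omits.
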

	\begin{proof}
		See the basic properties of sum Hessian operator in \cite{LR}.
	\end{proof}
	The following Lemma comes from \cite{RW2}.
	\begin{lemma}\label{lem2.2}
		For $\lambda=(\lambda_1,\cdots,\lambda_n)\in{\Gamma}_k$, $\lambda_1\ge\cdots\ge\lambda_n.$
		\par
		(i) Newton's inequality
		$$\left(\frac{\sigma_{k}(\lambda)}{C_n^k}\right)^2\le\left(\frac{\sigma_{k-1}(\lambda)}{C_n^{k-1}}\right)\left(\frac{\sigma_{k+1}(\lambda)}{C_n^{k+1}}\right),$$
		\par
		furthermore
		$$\sigma_{k}^2(\lambda)-\sigma_{k-1}(\lambda)\sigma_{k+1}(\lambda)\ge\Theta\sigma_{k}^2(\lambda),\quad\Theta=1-\frac{C_n^{k-1}C_n^{k+1}}{(C_n^k)^2}.$$
		\par
		(ii)
		$$\sigma_{k}(\lambda)\le C_{n,k}\lambda_1\cdots\lambda_k.$$
		\par
		(iii)$$\sigma_{l}\ge \lambda_1\cdots\lambda_l,\quad l=1,2,\cdots,k-1.$$
		\par
		(iv) If $\lambda_i\le 0$, then
		$$-\lambda_i\le \frac{n-k}{k}\la_1.$$
		\par
		(v)
		$$\lambda_{k}+\lambda_{k+1}+\cdots+\la_n>0,$$
		$$|\la_i|\le n\la_k, \quad\forall i>k.$$
	\end{lemma}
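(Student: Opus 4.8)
The plan is to rerun the maximum-principle argument proving Theorem 1, and to isolate the single place where the hypothesis $\sigma_k(D^2u)\ge -G$ is used, showing that for $k=n$ that step goes through with constants depending only on $n,k,\Omega,\alpha,\|u\|_{C^1},f$. For $k=n$ the equation is $S_n(D^2u):=\sigma_{n-1}(D^2u)+\alpha\sigma_n(D^2u)=f$, and admissibility means $\lambda(D^2u)\in\tilde\Gamma_n=\Gamma_{n-1}\cap\{S_n>0\}$. Two structural features make $k=n$ special: first, $\sigma_{n+1}\equiv 0$; second, ordering $\lambda_1\ge\cdots\ge\lambda_n$, Lemma \ref{lem2.2}(iv)--(v) forces $\lambda_1,\dots,\lambda_{n-1}>0$ and $-\tfrac1{n-1}\lambda_1\le\lambda_n$, so the only eigenvalue that can fail to be positive is $\lambda_n$, and it is controlled by $\lambda_1$; in particular $S_n^{nn}=S_{n-1}(\lambda|n)=\sigma_{n-2}(\lambda_1,\dots,\lambda_{n-1})+\alpha\lambda_1\cdots\lambda_{n-1}>0$ directly, and combined with $\alpha\sigma_n=f-\sigma_{n-1}$ one gets the quantitative control needed below without any lower bound on $\sigma_n$.

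Concretely, take (as for Theorem 1) the auxiliary function $W(x,\xi)=\beta\log(-u)+\log u_{\xi\xi}+\tfrac A2|Du|^2$ over $x\in\Omega$ and unit vectors $\xi$; since $u=0$ on $\partial\Omega$ its maximum is attained at an interior point $x_0$, say with $\xi=e_1$ and $D^2u(x_0)$ diagonal, $u_{11}=\lambda_1=\max_i\lambda_i$, and we may assume $\lambda_1$ is large. The critical-point relations express $(\log\lambda_1)_i$ in terms of $\tfrac{u_i}{u}$ and $u_iu_{ii}$; the once-differentiated equation reads $\sum_iS_n^{ii}u_{ii\ell}=f_{x_\ell}+f_uu_\ell+\sum_jf_{p_j}u_{j\ell}$; and Lemma \ref{lem2.1} gives $S_n^{ii}=S_{n-1}(\lambda|i)$, $S_n^{pp,qq}=S_{n-2}(\lambda|pq)$. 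Substituting into $0\ge\sum_iS_n^{ii}W_{ii}$ and reorganizing in the usual way reduces the argument to controlling the third-order block---the term $-\sum_{p,q}S_n^{pp,qq}u_{pp1}u_{qq1}$ together with terms $\tfrac{S_n^{pp}-S_n^{qq}}{\lambda_p-\lambda_q}u_{pq1}^2$ (which are sign-favorable because $S_n^{pp}=S_{n-1}(\lambda|p)$ is monotone in the eigenvalues)---and the gradient terms coming from the $f_{p_j}$-part of the differentiated equation and from $\tfrac A2|Du|^2$. The third-order block is exactly where the argument for Theorem 1 invokes $\sigma_k\ge -G$: for general $k<n$ the concavity-type inequality for $S_k$ from \cite{LR} leaves a Newton-quotient remainder, and that remainder is the term which must be bounded below via $\sigma_k\ge -G$. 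When $k=n$ this remainder vanishes identically (by $\sigma_{n+1}\equiv 0$), so the block is bounded above by $\dfrac{c_n}{S_n}\bigl(\sum_iS_n^{ii}u_{ii1}\bigr)^2$ with $S_n=f\ge m>0$ in the denominator. Feeding in the once-differentiated equation and applying Cauchy--Schwarz, this is absorbed into $\beta\dfrac{\sum_iS_n^{ii}u_i^2}{u^2}$ and into $A\sum_iS_n^{ii}u_{ii}^2\ge AS_n^{11}u_{11}^2$, provided $S_n^{11}=S_{n-1}(\lambda|1)$ has a uniform positive lower bound---the $k=n$ substitute for the step needing $\sigma_k\ge -G$, obtained from $\lambda_2,\dots,\lambda_{n-1}>0$, $-\tfrac1{n-1}\lambda_1\le\lambda_n$, $f\ge m>0$ and $\alpha\sigma_n=f-\sigma_{n-1}$. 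Choosing first $A$ and then $\beta$ large (depending only on the admissible data) forces $\lambda_1(x_0)\le C$, whence $(-u)^\beta\Delta u\le C$ everywhere since $\Delta u\le n\lambda_1$.

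The step I expect to be the main obstacle is establishing that the Newton-quotient remainder in the concavity-type estimate for $\sigma_{n-1}+\alpha\sigma_n$ on $\tilde\Gamma_n$---the term that in the general-$k$ argument forces the hypothesis $\sigma_k\ge -G$---genuinely disappears at $k=n$, i.e. that each of its summands carries a factor $\sigma_{n+1}$ (equivalently a symmetric function $\sigma_m(\lambda|i)$ with $m$ larger than the number of available arguments) and is therefore zero, and that the resulting constant $c_n$ is good enough for the absorption to close. A secondary point, of the same nature as in Theorem 1 but now without recourse to a two-sided bound on $\sigma_n$, is the uniform lower bound for $S_n^{11}$ used to absorb the gradient-derivative terms, which is where $\lambda\in\Gamma_{n-1}$, $f\ge m>0$ and the explicit relation between $\sigma_{n-1}$ and $\sigma_n$ enter.
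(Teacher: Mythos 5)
Your proposal does not address the statement at hand. The statement you were asked to prove is Lemma~\ref{lem2.2}, a collection of elementary algebraic inequalities for symmetric functions on the G\aa rding cone $\Gamma_k$: Newton's inequality $\bigl(\sigma_k/C_n^k\bigr)^2\le\bigl(\sigma_{k-1}/C_n^{k-1}\bigr)\bigl(\sigma_{k+1}/C_n^{k+1}\bigr)$ and its quantitative consequence, the product bounds $\sigma_k\le C_{n,k}\lambda_1\cdots\lambda_k$ and $\sigma_l\ge\lambda_1\cdots\lambda_l$ for $l<k$, the bound $-\lambda_i\le\frac{n-k}{k}\lambda_1$ on negative eigenvalues, and the tail estimates $\lambda_k+\cdots+\lambda_n>0$ and $|\lambda_i|\le n\lambda_k$. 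These are purely algebraic statements about a fixed point $\lambda\in\Gamma_k$, with no PDE involved, and in the paper they are imported directly from \cite{RW2} without proof.

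What you have written instead is a (reasonable-looking) sketch of the maximum-principle argument for Theorem~1.2, the Pogorelov $C^2$ estimate for $S_n=\sigma_{n-1}+\alpha\sigma_n$: you set up the test function, differentiate the equation, invoke the concavity inequality for $S_n$, and explain why the term that in the general-$k$ argument requires $\sigma_k\ge-G$ disappears at $k=n$. None of that constitutes, or even touches, a proof of the algebraic inequalities in Lemma~\ref{lem2.2}. In fact your proposal \emph{uses} Lemma~\ref{lem2.2}(iv)--(v) as a black box (``Lemma~\ref{lem2.2}(iv)--(v) forces $\lambda_1,\dots,\lambda_{n-1}>0$ and $-\tfrac{1}{n-1}\lambda_1\le\lambda_n$''), which is circular if your goal is to establish Lemma~\ref{lem2.2} itself. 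You should restart from the correct target: prove Newton's inequality (e.g.\ by the classical Maclaurin/Newton argument or by reducing to two variables via the divided-difference polynomial), derive the $\Theta$-bound as a direct algebraic consequence, prove the product bounds using the expansions $\sigma_k(\lambda)=\lambda_1\sigma_{k-1}(\lambda|1)+\sigma_k(\lambda|1)$ and the sign information coming from $\Gamma_k$, and obtain (iv) and (v) from the positivity of the successive partial sums $\sigma_1^{1,\dots,j}(\lambda)>0$ guaranteed by $\lambda\in\Gamma_k$.
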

	
	\begin{lemma}\label{lem2.3}	For $\lambda=(\lambda_1,\cdots,\lambda_n)\in\widetilde{\Gamma}_k$, $\lambda_1\ge\cdots\ge\lambda_n.$
		\par
		(i) 
		$$S_l(\lambda)>\frac{1}{2}(\la_1\cdots\la_{l-1}+\al\la_1\cdots\la_l), \quad l=1,2,\cdots,k-1.$$
		\par
		(ii)
		If $i=1,2,\cdots,k-1$, then exist a positive constant $\theta$ depending on $n$, $k$, such that
		$$S_k^{ii}\ge\frac{\theta S_k(\la)}{\la_i}.$$
	\end{lemma}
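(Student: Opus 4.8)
\emph{Sketch of the intended argument.} The plan is to reduce both statements to the classical bounds of Lemma~\ref{lem2.2} together with the identities of Lemma~\ref{lem2.1}, invoking the defining inclusion $\widetilde\Gamma_k=\Gamma_{k-1}\cap\{S_k>0\}$ in an essential way only at the top index. Throughout, recall that $\la\in\widetilde\Gamma_k\subset\Gamma_{k-1}$ already forces $\la_1,\dots,\la_{k-1}>0$ (as follows, e.g., from Lemma~\ref{lem2.2}(v) applied in $\Gamma_{k-1}$).

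For part (i), Lemma~\ref{lem2.2}(iii) gives $\sigma_j(\la)\ge\la_1\cdots\la_j$ for every $j\le k-2$. Hence for $1\le l\le k-2$ one gets immediately
\[
S_l(\la)=\sigma_{l-1}(\la)+\al\sigma_l(\la)\ge\la_1\cdots\la_{l-1}+\al\la_1\cdots\la_l>\tfrac12\bigl(\la_1\cdots\la_{l-1}+\al\la_1\cdots\la_l\bigr),
\]
the strict inequality because $\la_1\cdots\la_{l-1}>0$. Only the borderline case $l=k-1$ is delicate: the estimate $\sigma_{k-2}(\la)\ge\la_1\cdots\la_{k-2}$ still handles the first summand, but on $\Gamma_{k-1}$ alone $\sigma_{k-1}(\la)$ need not dominate any fixed multiple of $\la_1\cdots\la_{k-1}$ (it can approach $0$ while that product stays bounded away from $0$). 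Here I would use the hypothesis $S_k(\la)=\sigma_{k-1}(\la)+\al\sigma_k(\la)>0$: splitting the index set into $\{1,\dots,k-1\}$ and $\{k,\dots,n\}$ and combining this positivity with the Newton inequality Lemma~\ref{lem2.2}(i) for the pair $(\sigma_{k-1},\sigma_k)$ to conclude $\sigma_{k-1}(\la)\ge\tfrac12\la_1\cdots\la_{k-1}$, then adding $\al$ times this to $\sigma_{k-2}(\la)\ge\la_1\cdots\la_{k-2}$. I expect this implication — that $S_k>0$ promotes the bare positivity $\sigma_{k-1}>0$ to a quantitative lower bound $\sigma_{k-1}(\la)\gtrsim\la_1\cdots\la_{k-1}$ — to be the one genuine obstacle, and the constant $\tfrac12$ in the statement is presumably exactly the slack this argument leaves.

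For part (ii), fix $1\le i\le k-1$, so $\la_i>0$. By Lemma~\ref{lem2.1}(i),(iii),
\[
S_k(\la)=\la_i\,S_k^{ii}(\la)+S_k(\la|i),\qquad S_k^{ii}(\la)=S_{k-1}(\la|i),
\]
so the assertion $S_k^{ii}(\la)\ge\theta S_k(\la)/\la_i$ is equivalent to the uniform bound $S_k(\la|i)\le(1-\theta)S_k(\la)$, i.e.\ to $\la_i S_k^{ii}(\la)\ge\theta S_k(\la)$. To get it I would bound $S_k^{ii}(\la)=\sigma_{k-2}(\la|i)+\al\sigma_{k-1}(\la|i)$ from below — since $\la\in\Gamma_{k-1}$ forces $(\la|i)\in\Gamma_{k-2}$, Lemma~\ref{lem2.2}(iii) applied to $(\la|i)$ gives $\sigma_{k-2}(\la|i)\ge\la_1\cdots\la_{i-1}\la_{i+1}\cdots\la_{k-1}$, the product of the $k-2$ largest entries of $(\la|i)$ — and bound $S_k(\la)$ from above by $C_{n,k}\bigl(\la_1\cdots\la_{k-1}+\al\la_1\cdots\la_k\bigr)$ via Lemma~\ref{lem2.2}(ii). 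Dividing, the factor $\la_1\cdots\la_{i-1}\la_{i+1}\cdots\la_{k-1}$ cancels the explicit $\la_i$ in $S_k(\la)/\la_i$, leaving a ratio controlled purely by $n$ and $k$; the term $\al\sigma_k$ is dealt with by the sign of $\la_k$ (when $\la_k\le 0$ it only decreases the upper bound for $S_k$, and when $\la_k>0$ the cancellation is clean). An alternative, and probably cleaner, route is to observe that $(\la|i)\in\widetilde\Gamma_{k-1}$ — indeed $(\la|i)\in\Gamma_{k-2}$ and $S_{k-1}(\la|i)=S_k^{ii}(\la)>0$ by ellipticity of $S_k$ on the admissible cone — and then feed part~(i) (together with the summation identity Lemma~\ref{lem2.1}(iv) to dispose of the remaining $S_k(\la|j)$, $j\neq i$) back in to bound $S_k^{ii}(\la)$ directly from below.

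In short, the single nontrivial step in both parts is the same top-index estimate from part~(i); granting it, the rest is bookkeeping with Lemmas~\ref{lem2.1} and~\ref{lem2.2}.
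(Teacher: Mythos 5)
The paper offers no argument of its own here — it simply cites Lemma 2.3 of \cite{LR} — so your proposal can only be judged on its internal correctness, and there it has a genuine gap, located exactly at the step you yourself single out as "the one genuine obstacle." Your treatment of part (i) for $l\le k-2$ is fine, and you are right that $l=k-1$ is the delicate case. But the inequality you propose to extract from $S_k>0$, namely $\sigma_{k-1}(\la)\ge\tfrac12\la_1\cdots\la_{k-1}$ on $\widetilde\Gamma_k$, is false. Take $n=k=3$, $\la=(3,3,-1)$ and $\al\le 1/4$: then $\sigma_1=5>0$, $\sigma_2=3>0$ and $S_3=\sigma_2+\al\sigma_3=3-9\al>0$, so $\la\in\widetilde\Gamma_3$, yet $\sigma_2=3<\tfrac12\la_1\la_2=4.5$. (The lemma itself survives in this example because the surplus in $\sigma_{k-2}\ge\la_1\cdots\la_{k-2}$ — you only need half of that product — absorbs the deficit of $\al\sigma_{k-1}$ below $\tfrac{\al}{2}\la_1\cdots\la_{k-1}$; so the two summands of $S_{k-1}$ must be estimated jointly, not separately, and no Newton-inequality manipulation will rescue the separated version.)

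Part (ii) inherits the problem in both of your routes. In the "cleaner" route you apply part (i) to $(\la|i)\in\widetilde\Gamma_{k-1}$ (that inclusion is correct), but part (i) for the cone $\widetilde\Gamma_{k-1}$ only covers $S_l$ with $l\le k-2$, whereas $S_k^{ii}=S_{k-1}(\la|i)$ is precisely the top-index case $l=k-1$ for that cone — the case part (i) does not reach. The direct route has the same off-by-one issue: $\sigma_{k-2}(\la|i)\ge\la_1\cdots\la_{i-1}\la_{i+1}\cdots\la_{k-1}$ is a top-index instance of Lemma \ref{lem2.2}(iii), requiring $(\la|i)\in\Gamma_{k-1}$, which need not hold for $i\le k-1$ (only $(\la|i)\in\Gamma_{k-2}$ is guaranteed, giving the product bound only up to $\sigma_{k-3}$); and when $\la_k>0$ the claimed "clean cancellation" of the $\al\sigma_k$ term in the upper bound for $S_k$ would need $\sigma_{k-1}(\la|i)\gtrsim\la_1\cdots\la_{i-1}\la_{i+1}\cdots\la_k$, which is not supplied (indeed $\sigma_{k-1}(\la|i)$ can be negative). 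To repair the argument you need a genuinely coupled estimate for $\sigma_{k-2}+\al\sigma_{k-1}$ — e.g.\ an induction on $l$ through the cones $\widetilde\Gamma_l$ using the expansion $S_l(\la)=\la_iS_{l-1}(\la|i)+S_l(\la|i)$ — rather than termwise bounds; this is presumably what \cite{LR} does.
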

	\begin{proof}
		See Lemma 2.3 in \cite{LR}.
	\end{proof}
	
	\begin{lemma}\label{lem2.4}
		Assume that $k>l$, for $\vartheta=\dfrac{1}{k-l}$, then for
		$\la\in\tilde\Gamma_k$, we have
		\begin{align*}
			-&\frac{S_k^{pp,qq}}{S_k}u_{pph}u_{qqh}+\dfrac{S_l^{pp,qq}}{S_l}u_{pph}u_{qqh}\\
			&\geq \abc{\dfrac{(S_k)_h}{S_k}-\dfrac{(S_l)_h}{S_l}}
			\abc{(\vartheta-1)\dfrac{(S_k)_h}{S_k}-(\vartheta+1)\dfrac{(S_l)_h}{S_l}}.\nonumber
		\end{align*}
		Furthermore, for sufficiently small $\delta>0$, we have
		\begin{align*}
			-&S_k^{pp,qq}u_{pph}u_{qqh} +(1-\vartheta+\dfrac{\vartheta}{\delta})\dfrac{(S_k)_h^2}{S_k}\\
			&\geq S_k(\vartheta+1-\delta\vartheta) \abz{\dfrac{(S_l)_h}{S_l}}^2
			-\dfrac{S_k}{S_l}S_l^{pp,qq}u_{pph}u_{qqh}.\nonumber
		\end{align*}
	\end{lemma}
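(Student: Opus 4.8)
The plan is to reduce both inequalities to a single concavity fact and then run through elementary algebra. Set
$$
F(\lambda):=\abc{\dfrac{S_k(\lambda)}{S_l(\lambda)}}^{\vartheta},\qquad \vartheta=\dfrac{1}{k-l},
$$
which is well defined and positive on $\tilde\Gamma_k$ (indeed $S_l>0$ on $\tilde\Gamma_k$ for $l<k$ by Lemma \ref{lem2.3}(i), and $S_k>0$ there by definition). The key input is that $F$ is concave on $\tilde\Gamma_k$; this is the sum-Hessian analogue of the classical concavity of $(\sigma_k/\sigma_l)^{1/(k-l)}$ on $\Gamma_k$ (when $\alpha=0$ it \emph{is} that statement, since then $S_k=\sigma_{k-1}$ and $\tilde\Gamma_k=\Gamma_{k-1}$), and it follows from the concavity properties of the operators $S_k$ established in \cite{LR,LRW}, or can be checked directly from the Newton--MacLaurin inequalities of Lemma \ref{lem2.2}.

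Granting this, I would compute the second derivatives of $F$ at a point where the matrix is diagonal. Writing $S_k^{pp}=\partial S_k/\partial\lambda_p$ and using Lemma \ref{lem2.1} (i) and (ii),
$$
F^{pp}=\vartheta F\abc{\dfrac{S_k^{pp}}{S_k}-\dfrac{S_l^{pp}}{S_l}},
$$
$$
F^{pp,qq}=\vartheta F^{qq}\abc{\dfrac{S_k^{pp}}{S_k}-\dfrac{S_l^{pp}}{S_l}}+\vartheta F\abc{\dfrac{S_k^{pp,qq}}{S_k}-\dfrac{S_k^{pp}S_k^{qq}}{S_k^2}-\dfrac{S_l^{pp,qq}}{S_l}+\dfrac{S_l^{pp}S_l^{qq}}{S_l^2}}.
$$
Contracting with $u_{pph}u_{qqh}$ (summation over $p,q$, fixed $h$) and abbreviating $a:=(S_k)_h/S_k$ and $b:=(S_l)_h/S_l$, where $(S_k)_h=\sum_pS_k^{pp}u_{pph}$, the first-derivative pieces collapse to a perfect square and one gets
$$
\dfrac{1}{\vartheta F}\,F^{pp,qq}u_{pph}u_{qqh}=\vartheta\,(a-b)^2+\dfrac{S_k^{pp,qq}u_{pph}u_{qqh}}{S_k}-a^2-\dfrac{S_l^{pp,qq}u_{pph}u_{qqh}}{S_l}+b^2.
$$
Since $F>0$ is concave the left side is $\le 0$; transposing terms and using the elementary identity $\vartheta(a-b)^2-a^2+b^2=(a-b)\bigl((\vartheta-1)a-(\vartheta+1)b\bigr)$ produces exactly the first claimed inequality.

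For the second inequality I would multiply the first one by $S_k>0$ to get
$$
-S_k^{pp,qq}u_{pph}u_{qqh}\ge-\dfrac{S_k}{S_l}S_l^{pp,qq}u_{pph}u_{qqh}+S_k(a-b)\bigl((\vartheta-1)a-(\vartheta+1)b\bigr),
$$
substitute this into the left side of the target estimate, and cancel the common term $-\tfrac{S_k}{S_l}S_l^{pp,qq}u_{pph}u_{qqh}$; after dividing by $S_k>0$ it then suffices to verify
$$
(a-b)\bigl((\vartheta-1)a-(\vartheta+1)b\bigr)+\abc{1-\vartheta+\dfrac{\vartheta}{\delta}}a^2\ \ge\ (\vartheta+1-\delta\vartheta)\,b^2 .
$$
Expanding, the difference of the two sides is $\dfrac{\vartheta}{\delta}a^2-2\vartheta ab+\delta\vartheta b^2=\vartheta\abc{\dfrac{a}{\sqrt\delta}-\sqrt\delta\,b}^2\ge 0$, which holds for every $\delta>0$; this completes the argument.

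The only genuinely substantive step is the concavity of $F=(S_k/S_l)^{1/(k-l)}$ on $\tilde\Gamma_k$ — everything after that is the product-rule computation above plus one completion of squares. The mild point to watch is that $S_k=\sigma_{k-1}+\alpha\sigma_k$ is not homogeneous, so this concavity cannot be imported verbatim from the homogeneous $\sigma$-quotient setting; it has to be taken from (or adapted from) the concavity results for sum-Hessian operators, with $\Gamma_k,\Gamma_{k-1}$ replaced by $\tilde\Gamma_k,\Gamma_{k-1}$.
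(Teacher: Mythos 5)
Your argument is correct and follows essentially the same route as the proof the paper defers to (Lemma 2.4 in \cite{LR}): the first inequality is exactly the expansion of $F^{pp,qq}\xi_p\xi_q\le 0$ for $F=(S_k/S_l)^{\vartheta}$, and the second follows from the first by the completion of squares $\tfrac{\vartheta}{\delta}a^2-2\vartheta ab+\delta\vartheta b^2\ge 0$. The one substantive input you correctly isolate --- concavity of $(S_k/S_l)^{1/(k-l)}$ on $\tilde\Gamma_k$ despite the inhomogeneity of $S_k$ --- is indeed available from \cite{LR,LRW}, where it is obtained by the lifting $\lambda\mapsto(\lambda,1/\alpha)$, under which $S_j(\lambda)=\alpha\sigma_j(\lambda,1/\alpha)$ and $\tilde\Gamma_k$ maps into $\Gamma_k(\mathbb{R}^{n+1})$, reducing the claim to the classical concavity of $(\sigma_k/\sigma_l)^{1/(k-l)}$.
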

	
	\begin{proof}
		See Lemma 2.4 in \cite{LR}.
	\end{proof}
	The following Lemma comes from \cite{Ball}.
	
	\begin{lemma} \label{lem2.5}
		Denote by $Sym(n)$ the set of all $n\times n$ symmetric matrices.
		Let $F$ be a $C^2$ symmetric function defined in some open subset
		$\Psi \subset Sym(n)$. At any diagonal matrix $A\in\Psi$
		with distinct eigenvalues, let$\ddot{F}(B,B)$ be the second derivative of $C^2$ symmetric function $F$ in direction $B \in
		Sym(n)$, then
		\begin{align*}
			\ddot{F}(B,B) =  \sum_{j,k=1}^n {\ddot{f}}^{jk}
			B_{jj}B_{kk} + 2 \sum_{j < k} \frac{\dot{f}^j -
				\dot{f}^k}{{\kappa}_j - {\kappa}_k} B_{jk}^2.
		\end{align*}
	\end{lemma}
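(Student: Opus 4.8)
The plan is to reduce everything to the classical second-order perturbation theory of simple eigenvalues. Write $F(A)=f(\lambda_1(A),\dots,\lambda_n(A))$, where $f$ is the given symmetric function and $\lambda_1(A)\ge\cdots\ge\lambda_n(A)$ are the ordered eigenvalues of $A$. Fix the diagonal matrix $A=\mathrm{diag}(\kappa_1,\dots,\kappa_n)\in\Psi$ with $\kappa_1,\dots,\kappa_n$ pairwise distinct, fix $B\in Sym(n)$, and set $A(t)=A+tB$. Since the $\kappa_i$ are distinct, for small $t$ the characteristic polynomial of $A(t)$ still has $n$ simple roots, so by the implicit function theorem the eigenvalues $\kappa_i(t)$ and a corresponding family of unit eigenvectors $v_i(t)$ depend smoothly (indeed real-analytically) on $t$, with $\kappa_i(0)=\kappa_i$ and $v_i(0)=e_i$. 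Then $g(t):=F(A(t))=f(\kappa_1(t),\dots,\kappa_n(t))$ is $C^2$ and the chain rule gives
\[
g''(0)=\sum_{j,k=1}^n \ddot f^{jk}\,\kappa_j'(0)\kappa_k'(0)+\sum_{j=1}^n \dot f^j\,\kappa_j''(0),
\]
so it remains only to compute $\kappa_j'(0)$ and $\kappa_j''(0)$.

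For the first derivatives, differentiate $A(t)v_i(t)=\kappa_i(t)v_i(t)$ at $t=0$ to get $Be_i+Av_i'(0)=\kappa_i'(0)e_i+\kappa_i v_i'(0)$. Pairing with $e_i$ and using $Ae_i=\kappa_ie_i$ (so $\langle Av_i'(0),e_i\rangle=\langle v_i'(0),Ae_i\rangle=\kappa_i\langle v_i'(0),e_i\rangle$) yields $\kappa_i'(0)=B_{ii}$. Pairing instead with $e_j$, $j\ne i$, gives $B_{ij}+\kappa_j\langle v_i'(0),e_j\rangle=\kappa_i\langle v_i'(0),e_j\rangle$, hence $\langle v_i'(0),e_j\rangle=B_{ij}/(\kappa_i-\kappa_j)$; moreover $\|v_i(t)\|\equiv 1$ forces $\langle v_i'(0),e_i\rangle=0$, so $v_i'(0)=\sum_{j\ne i}\frac{B_{ij}}{\kappa_i-\kappa_j}e_j$. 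Differentiating the eigenvalue relation once more (note $A''(t)=0$) gives $2Bv_i'(0)+Av_i''(0)=\kappa_i''(0)e_i+2\kappa_i'(0)v_i'(0)+\kappa_i v_i''(0)$; pairing with $e_i$, the terms $\langle Av_i''(0),e_i\rangle$ and $\kappa_i\langle v_i''(0),e_i\rangle$ cancel exactly as before, and $2\kappa_i'(0)\langle v_i'(0),e_i\rangle=0$, leaving $\kappa_i''(0)=2\langle Bv_i'(0),e_i\rangle=2\sum_{j\ne i}\frac{B_{ij}^2}{\kappa_i-\kappa_j}$.

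Substituting into the chain-rule identity, $\sum_{j}\dot f^j\kappa_j''(0)=2\sum_{j}\sum_{k\ne j}\dot f^j\frac{B_{jk}^2}{\kappa_j-\kappa_k}$; grouping the two contributions of each unordered pair $\{j,k\}$ and using $B_{jk}=B_{kj}$ turns this into $2\sum_{j<k}\frac{\dot f^j-\dot f^k}{\kappa_j-\kappa_k}B_{jk}^2$, while $\sum_{j,k}\ddot f^{jk}\kappa_j'(0)\kappa_k'(0)=\sum_{j,k}\ddot f^{jk}B_{jj}B_{kk}$. This is precisely the asserted formula for $\ddot F(B,B)=g''(0)$. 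The only nonroutine point is the smooth dependence of simple eigenvalues and eigenvectors on the matrix near a point of simple spectrum, which is the classical Rellich/implicit-function statement and is exactly where the hypothesis of distinct eigenvalues is used; everything else is a direct computation. (One may also remark that $(\dot f^j-\dot f^k)/(\kappa_j-\kappa_k)$ is a priori defined only for $\kappa_j\ne\kappa_k$, which holds here, although symmetry of $f$ makes it extend continuously across the diagonal.)
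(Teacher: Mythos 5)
Your argument is correct. The paper does not prove Lemma~\ref{lem2.5} at all; it simply cites \cite{Ball}, so there is no ``paper proof'' to compare against. What you give is precisely the standard Rellich-type perturbation derivation that underlies Ball's formula: smooth dependence of simple eigenvalues and normalized eigenvectors on the matrix near a point of simple spectrum, the first-order expansions $\kappa_i'(0)=B_{ii}$ and $v_i'(0)=\sum_{j\ne i}\frac{B_{ij}}{\kappa_i-\kappa_j}e_j$, the second-order eigenvalue formula $\kappa_i''(0)=2\sum_{j\ne i}\frac{B_{ij}^2}{\kappa_i-\kappa_j}$, and then the chain rule for $F=f\circ\lambda$ together with symmetrization over unordered pairs $\{j,k\}$. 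All the cancellations you invoke (self-adjointness of $A$ against $e_i$, the normalization constraint $\langle v_i'(0),e_i\rangle=0$) are applied correctly, and the final regrouping into $2\sum_{j<k}\frac{\dot f^j-\dot f^k}{\kappa_j-\kappa_k}B_{jk}^2$ is right. This is exactly where the hypothesis of distinct eigenvalues is needed, as you note; no gaps.
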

	
	\begin{lemma}\label{lem2.6}
		Assume that $\lambda=(\lambda_1,\cdots,\lambda_n)\in \widetilde{\Gamma}_k$, $\lambda_1\geq\cdots\geq\lambda_n$ and exist constants $F,G\ge0$, such that $\sigma_{k-1}+\alpha\sigma_{k}\le F$, $\sigma_{k}\ge -G$, then
		$$\lambda_n\ge -K,$$
		where $K$ is a positive constant depending only on $n,k,F, G,\al.$
	\end{lemma}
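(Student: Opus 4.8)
The strategy is to bound $\lambda_{k-1}$ from above; this suffices, since by Lemma~\ref{lem2.2}(v) (applied with cone index $k-1$) one has $-\lambda_n\le n\lambda_{k-1}$. First some reductions. Since $\widetilde{\Gamma}_k\subset\Gamma_{k-1}$ we have $\lambda_1\ge\cdots\ge\lambda_{k-1}>0$, and from $\sigma_{k-1}+\alpha\sigma_k\le F$, $\sigma_k\ge -G$ we get $\sigma_{k-1}(\lambda)\le F+\alpha G=:\widetilde F$. If $\lambda_n\ge 0$ there is nothing to prove, and by Lemma~\ref{lem2.2}(iv) (cone index $k-1$), $|\lambda_n|\le\frac{n-k+1}{k-1}\lambda_1$, so we may also assume $\lambda_1\ge 1$. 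It is convenient to prove instead the cone statement: \emph{if $\lambda\in\Gamma_{k-1}$, $\lambda_1\ge\cdots\ge\lambda_n$, $\sigma_{k-1}(\lambda)\le A$, $\sigma_k(\lambda)\ge -B$, then $\lambda_n\ge -K(n,k,A,B)$} — Lemma~\ref{lem2.6} is the case $A=F+\alpha G$, $B=G$ — and to argue by induction on $k$, the base $k=2$ being $\lambda_n^2\le|\lambda|^2=\sigma_1(\lambda)^2-2\sigma_2(\lambda)\le A^2+2B$.

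Two cases are handled directly. If $\sigma_k(\lambda)>0$, then $\lambda\in\Gamma_k$ and Lemma~\ref{lem2.2}(iii) with $l=k-1$ gives $A\ge\sigma_{k-1}(\lambda)\ge\lambda_1\cdots\lambda_{k-1}\ge\lambda_{k-1}^{k-1}$. Otherwise $-B\le\sigma_k(\lambda)\le 0$, and we pivot on the largest eigenvalue: writing $\lambda'=(\lambda_2,\dots,\lambda_n)\in\Gamma_{k-2}$, Lemma~\ref{lem2.1}(iii) gives $\sigma_{k-1}(\lambda)=\sigma_{k-1}(\lambda')+\lambda_1\sigma_{k-2}(\lambda')$ and $\sigma_k(\lambda)=\sigma_k(\lambda')+\lambda_1\sigma_{k-1}(\lambda')$. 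If $\sigma_{k-1}(\lambda')\ge 0$ then $\lambda'\in\overline{\Gamma_{k-1}}$, so on one hand $\lambda_1\sigma_{k-2}(\lambda')\le\sigma_{k-1}(\lambda)\le A$, and on the other hand Lemma~\ref{lem2.2}(iii) applied to $\lambda'$ with $l=k-2$ gives $\sigma_{k-2}(\lambda')\ge\lambda_2\cdots\lambda_{k-1}$; hence $\lambda_1\lambda_2\cdots\lambda_{k-1}\le A$, so $\lambda_{k-1}^{k-1}\le A$.

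The remaining, and genuinely delicate, case is $\sigma_{k-1}(\lambda')<0$: then $\lambda'\in\Gamma_{k-2}$ with $\sigma_{k-2}(\lambda')>0>\sigma_{k-1}(\lambda')$, and I would apply the inductive statement to $\lambda'$. For that one needs bounds $\sigma_{k-2}(\lambda')\le A'$ and $\sigma_{k-1}(\lambda')\ge -B'$ in terms of the data. Since $\lambda_1\ge 1$, the identity $\lambda_1|\sigma_{k-1}(\lambda')|=\sigma_k(\lambda')-\sigma_k(\lambda)\le\sigma_k(\lambda')+B$ gives $|\sigma_{k-1}(\lambda')|\le\sigma_k(\lambda')+B$, and then $\lambda_1\sigma_{k-2}(\lambda')=\sigma_{k-1}(\lambda)+|\sigma_{k-1}(\lambda')|\le A+\sigma_k(\lambda')+B$; so everything reduces to an upper bound on $\sigma_k(\lambda')$. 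When $\sigma_k(\lambda')\le 0$ this is immediate (take $B'=B$, $A'=A+B$), and the inductive hypothesis applied to $\lambda'\in\Gamma_{k-2}$ finishes. \textbf{The main obstacle} is the sub-case $\sigma_k(\lambda')>0$ — a strictly shorter vector in $\Gamma_{k-2}\setminus\Gamma_{k-1}$ with positive top symmetric function, which does occur (e.g. $\lambda'=(4,-1,-1,-1)$ for $k=3$). There $\lambda_1$, equivalently the lower–order symmetric functions of $\lambda'$, may be arbitrarily large, so $\sigma_k(\lambda')$ cannot be bounded crudely; the resolution should be to peel the largest entry off $\lambda'$ as well and iterate (this terminates in at most $k$ steps, since one eventually reaches $\Gamma_0$), tracking how these large quantities cancel across the successive pivot identities. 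Making that bookkeeping precise and uniform in $n$ is the technical heart of the argument; once $\lambda_{k-1}\le C$ is obtained in all cases, $-\lambda_n\le n\lambda_{k-1}\le nC=:K$ completes the proof.
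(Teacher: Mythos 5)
Your proposal is not complete: the sub-case you yourself flag as ``the main obstacle'' --- namely $\lambda'=(\lambda|1)\in\Gamma_{k-2}$ with $\sigma_{k-1}(\lambda')<0<\sigma_k(\lambda')$ --- is exactly where all the difficulty of the lemma lives, and ``peel the largest entry off $\lambda'$ as well and iterate, tracking how these large quantities cancel'' is a plan, not a proof. In that regime the inductive hypothesis cannot be invoked because you have no admissible upper bound $A'$ on $\sigma_{k-2}(\lambda')$ (your own chain $\lambda_1\sigma_{k-2}(\lambda')\le A+\sigma_k(\lambda')+B$ shows the needed bound degenerates precisely when $\sigma_k(\lambda')$ is large), and it is not clear that the proposed multi-step peeling closes up with constants uniform in $n$ and $k$. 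So as written there is a genuine gap, even though the surrounding reductions (the case $\sigma_k(\lambda)>0$, the case $\sigma_{k-1}(\lambda')\ge 0$, the case $\sigma_k(\lambda')\le 0$, and the base case $k=2$) are all correct.

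The paper avoids this entirely by pivoting on the \emph{smallest} eigenvalue rather than the largest, which is worth internalizing. After disposing of the trivial case $\lambda_{k-1}\le 1$ (where $\lambda_{k-1}+\cdots+\lambda_n>0$ already gives $\lambda_n>k-n-1$), one uses Lemma \ref{lem2.1} (iii) in the form $\sigma_k(\lambda)=\sigma_k(\lambda|n)+\lambda_n\sigma_{k-1}(\lambda|n)\ge -G$ and $S_k(\lambda)=\lambda_n S_{k-1}(\lambda|n)+S_k(\lambda|n)\le F$ to sandwich $\lambda_n$ between two explicit quotients in the data of $(\lambda|n)\in\Gamma_{k-1}$. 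Eliminating $\lambda_n$ between the two bounds and applying Newton's inequality (Lemma \ref{lem2.2} (i)) to $(\lambda|n)$ produces
$\Theta\,\sigma_{k-1}(\lambda|n)-G/(-\lambda_n)-G\alpha\le F$, and since $\lambda_{k-1}>1$ gives $\sigma_{k-1}(\lambda|n)\ge(-\lambda_n)\lambda_1\cdots\lambda_{k-2}\ge-\lambda_n$, this is a single scalar inequality $\Theta(-\lambda_n)-G/(-\lambda_n)\le F+G\alpha$ that bounds $-\lambda_n$ outright. No induction on $k$ or $n$, no bound on $\lambda_{k-1}$, and no case analysis on the sign of intermediate $\sigma_j$'s is needed. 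If you want to salvage your approach, you would have to supply the missing iteration with explicit constants; but the Newton-inequality pivot on $\lambda_n$ is both shorter and structurally the right tool here, since the hypothesis $\sigma_k\ge -G$ enters only through the single identity $\sigma_k=\sigma_k(\lambda|n)+\lambda_n\sigma_{k-1}(\lambda|n)$.
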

	\begin{proof}
		Without loss of generality, we assume that $\lambda_n<0.$\\
		\textbf{Case1}:$\lambda_{k-1}\le 1$.\\
		By $\lambda\in \Gamma_{k-1}$, we have
		$$\sigma_{k-1}^{11,\cdots ,k-2,k-2}(\lambda)=\lambda_{k-1}+\cdots+\lambda_n>0,$$
		hence
		$$\lambda_n>-\lambda_{k-1}-\cdots-\lambda_{n-1}\ge k-n-1. $$
		\textbf{Case2}:$\lambda_{k-1}>1$.\\
		Since 
		$$\sigma_{k-1}(\lambda|n)=\sigma_{k-1}(\lambda)-\lambda_n\sigma_{k-2}(\lambda|n)>0,$$
		we have $(\lambda|n)\in\Gamma_{k-1}$.\\
		Since
		$$\sigma_{k}(\lambda)=\sigma_{k}(\lambda|n)+\lambda_n\sigma_{k-1}(\lambda|n)>-G,$$
		we get
		\begin{equation}\label{2.4}
			\lambda_n>\frac{-G}{\sigma_{k-1}(\lambda|n)}-\frac{\sigma_{k}(\lambda|n)}{\sigma_{k-1}(\lambda|n)}.		
		\end{equation}
		Due to
		\begin{align*}
			&\sigma_{k-1}(\lambda)+\alpha\sigma_{k}(\lambda)\\
			=&\lambda_n[\sigma_{k-2}(\lambda|n)+\alpha\sigma_{k-1}(\lambda|n)]+\sigma_{k-1}(\lambda|n)+\alpha\sigma_{k}(\lambda|n)\le F,
		\end{align*}
		thus
		\begin{equation}\label{2.5}
			\lambda_n\le\frac{F}{\sigma_{k-2}(\lambda|n)+\alpha\sigma_{k-1}(\lambda|n)}-\frac{\sigma_{k-1}(\lambda|n)+\alpha\sigma_{k}(\lambda|n)}{\sigma_{k-2}(\lambda|n)+\alpha\sigma_{k-1}(\lambda|n)}.
		\end{equation}
		Using \eqref{2.4}, \eqref{2.5}, we obtain
		$$\frac{F}{\sigma_{k-2}(\lambda|n)+\alpha\sigma_{k-1}(\lambda|n)}-\frac{\sigma_{k-1}(\lambda|n)+\alpha\sigma_{k}(\lambda|n)}{\sigma_{k-2}(\lambda|n)+\alpha\sigma_{k-1}(\lambda|n)}\ge\frac{-G}{\sigma_{k-1}(\lambda|n)}-\frac{\sigma_{k}(\lambda|n)}{\sigma_{k-1}(\lambda|n)}.$$
		Combining Lemma 2.2 (i), we have
		\begin{align*}
			F&\ge\frac{\sigma_{k-1}^2(\lambda|n)-\sigma_{k-2}(\lambda|n)\sigma_{k}(\lambda|n)}{\sigma_{k-1}(\lambda|n)}-G\frac{\sigma_{k-2}(\lambda|n)}{\sigma_{k-1}(\lambda|n)}-G\alpha\\
			&\ge \Theta\sigma_{k-1}(\lambda|n)-G\frac{\sigma_{k-2}(\lambda|n)}{\sigma_{k-1}(\lambda|n)}-G\alpha\\
			&\ge \Theta\sigma_{k-1}(\lambda|n)-G\frac{\sigma_{k-2}(\lambda|n)}{\sigma_{k-1}(\lambda)-\lambda_n\sigma_{k-2}(\lambda|n)}-G\alpha\\
			&\ge \Theta\sigma_{k-1}(\lambda|n)-\frac{G}{-\la_n}-G\alpha.
		\end{align*}
		Since$(\lambda|n)\in\Gamma_{k-1}$, using Lemma 2.2 (iii), we have
		$$\sigma_{k-1}(\lambda|n)=\sigma_{k-1}(\lambda)-\lambda_n\sigma_{k-2}(\lambda|n)\ge -\la_n\lambda_1\cdots\lambda_{k-2}\ge-\la_n$$
		hence
		$$\Theta(-\la_n) -\frac{G}{-\la_n}\le F+G\al,$$
		which implies that $-\la_n\le K.$
	\end{proof}

	\section{Two concavity inequalities}
	\subsection{A concavity inequality about $S_k$}
	In this section, we will prove the following concavity inequality for  Sum Hessian operator. The lemma is inspired by \cite{Lu}.
	\begin{lemma}\label{3.4}
		Let $\lambda=(\lambda_1,\cdots,\lambda_n)\in \widetilde{\Gamma}_k$ with $\lambda_1\ge\cdots\ge\lambda_n$ and let $1\le l \le k-2$. Then $\forall \epsilon,\delta,\delta_0 \in (0,1)$, $\exists \delta^{'} >0$, such that if $\lambda_l \ge \delta\lambda_1,\lambda_{l+1} \le \delta^{'}\lambda_1$, we have
		$$-\sum\limits_{p\ne q}\frac{S_k^{pp,qq}\xi_p \xi_q}{S_k}+(1-\vartheta+\frac{\vartheta}{\delta})\frac{\left(\sum\limits_{i}S_k^{ii}\xi_i \right)^2}{S_k^2}\ge (1+\vartheta-\delta\vartheta-\epsilon)\frac{\xi_1^2}{\lambda_1^2}-\delta_0 \sum\limits_{i>l}\frac{S_k^{ii}\xi_i^2}{\lambda_1 S_k},$$
		where $\vartheta=\frac{1}{k-l}$, $\xi=(\xi_1,\cdots,\xi_n)$ is an arbitrary vector in $\mathbb{R}^n.$
	\end{lemma}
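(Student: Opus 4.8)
The plan is to build on Lemma~\ref{lem2.4}, which already delivers a lower bound of the form
$$-S_k^{pp,qq}u_{pph}u_{qqh}+\Big(1-\vartheta+\tfrac{\vartheta}{\delta}\Big)\tfrac{(S_k)_h^2}{S_k}\ge S_k(\vartheta+1-\delta\vartheta)\Big[\tfrac{(S_l)_h}{S_l}\Big]^2-\tfrac{S_k}{S_l}S_l^{pp,qq}u_{pph}u_{qqh},$$
but here applied at the level of eigenvalues with the test vector $\xi$ in place of the derivatives $u_{\cdot\cdot h}$ (the inequality in Lemma~\ref{lem2.4} is purely algebraic in $\la\in\widetilde\Gamma_k$ and the vector, so this substitution is legitimate). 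Dividing through by $S_k$ this reads
$$-\sum_{p\ne q}\frac{S_k^{pp,qq}\xi_p\xi_q}{S_k}+\Big(1-\vartheta+\tfrac{\vartheta}{\delta}\Big)\frac{\big(\sum_i S_k^{ii}\xi_i\big)^2}{S_k^2}\ge (\vartheta+1-\delta\vartheta)\Big[\frac{(S_l)_\xi}{S_l}\Big]^2-\frac{1}{S_l}\sum_{p\ne q}S_l^{pp,qq}\xi_p\xi_q,$$
where $(S_l)_\xi:=\sum_i S_l^{ii}\xi_i$. So the whole problem reduces to showing that, under the pinching hypothesis $\la_l\ge\delta\la_1$, $\la_{l+1}\le\delta'\la_1$,
$$(\vartheta+1-\delta\vartheta)\Big[\frac{(S_l)_\xi}{S_l}\Big]^2-\frac{1}{S_l}\sum_{p\ne q}S_l^{pp,qq}\xi_p\xi_q\ \ge\ (1+\vartheta-\delta\vartheta-\epsilon)\frac{\xi_1^2}{\la_1^2}-\delta_0\sum_{i>l}\frac{S_k^{ii}\xi_i^2}{\la_1 S_k}.$$

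First I would analyze $(S_l)_\xi/S_l$. Since $l\le k-2$ we have $\la\in\Gamma_{k-1}\subset\Gamma_l$, and by Lemma~\ref{lem2.3}(i) $S_l(\la)$ is comparable to $\la_1\cdots\la_{l-1}+\al\la_1\cdots\la_l$, which under $\la_1,\dots,\la_l\gtrsim\delta\la_1$ is comparable to $\la_1^{l-1}$ (up to $\al$ and powers of $\delta$). The key point is that $S_l^{11}=S_{l-1}(\la|1)$ picks up the dominant block $\la_2\cdots\la_{l-1}+\al\la_2\cdots\la_l$, so $S_l^{11}\xi_1/S_l\approx \xi_1/\la_1$; meanwhile, for $i\ge l+1$, $S_l^{ii}=S_{l-1}(\la|i)$ contains $\la_1\cdots\la_{l-1}+\al\la_1\cdots\la_l$ which is of the \emph{same} order as $S_l$, so $S_l^{ii}/S_l\approx 1/\la_i$ is \emph{not} small — the term $\sum_{i>l}S_l^{ii}\xi_i/S_l$ need not be controlled by $\xi_1/\la_1$. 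This is the main obstacle, and the resolution is that the second-derivative term $-\sum_{p\ne q}S_l^{pp,qq}\xi_p\xi_q$ is \emph{negative-definite enough} on exactly these directions: writing it out via Lemma~\ref{lem2.1}(ii) as $-2\sum_{p<q}S_{l-2}(\la|pq)\xi_p\xi_q$, one completes the square on the $\xi_1$ versus $\{\xi_i:i>l\}$ cross terms against $[(S_l)_\xi/S_l]^2$. Concretely I expect to split indices into the "good" block $\{2,\dots,l\}$ (where $S_l^{ii}/S_l$ is genuinely small, of size $\delta'$ or smaller once $\la_{l+1}\le\delta'\la_1$ forces $\la_i$ large) and the "bad" tail $\{l+1,\dots,n\}$, and to absorb the bad-tail contribution using the freely available $-\delta_0\sum_{i>l}S_k^{ii}\xi_i^2/(\la_1 S_k)$ on the right: by Lemma~\ref{lem2.3} and the structure of $S_k$ versus $S_l$ one shows $S_k^{ii}/(\la_1 S_k)$ dominates the relevant coefficient coming from $S_l^{pp,qq}$ up to a factor that is made small by choosing $\delta'$ small.

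The order of steps is therefore: (1) invoke Lemma~\ref{lem2.4} with $\xi$ to reduce to the displayed $S_l$-inequality; (2) establish the two-sided estimates $S_l\sim \la_1\cdots\la_{l-1}(1+\al\la_l)$ and $S_l^{11}/S_l=1/\la_1+O(\text{small})$ under the pinching, so that $(\vartheta+1-\delta\vartheta)[S_l^{11}\xi_1/S_l]^2\ge(1+\vartheta-\delta\vartheta-\epsilon/2)\xi_1^2/\la_1^2$ once $\delta'$ is small; (3) bound the cross terms $2(\vartheta+1-\delta\vartheta)(S_l^{11}\xi_1/S_l)\sum_{i\ge 2}(S_l^{ii}\xi_i/S_l)$ and the square $(\sum_{i\ge 2}S_l^{ii}\xi_i/S_l)^2$ using Cauchy--Schwarz with a small weight, throwing the $\xi_1^2$-part into the $\epsilon/2$ slack and the $\xi_i^2$-parts ($i\ge2$) into the second-derivative term and the $\delta_0$-term; (4) for the "good" indices $2\le i\le l$ use that $\la_i\ge\delta\la_1$ makes $1/\la_i$ comparable to $1/\la_1$ and makes the $S_l^{ii}/S_l$ weights small, while for the "bad" indices $i>l$ use $\la_{l+1}\le\delta'\la_1$ together with Lemma~\ref{lem2.2}(v) ($|\la_i|\le n\la_{l+1}$... adjusted to the $l$-level via $\widetilde\Gamma_k$) and the comparison $S_k^{ii}\gtrsim S_k/\la_1$ on that tail to absorb everything into $-\delta_0\sum_{i>l}S_k^{ii}\xi_i^2/(\la_1 S_k)$. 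Finally I would record the dependence: $\delta'$ is chosen last, depending on $n,k,l,\epsilon,\delta,\delta_0$, which is exactly the quantifier order asserted in the statement. The genuinely delicate point throughout is step (3)--(4): making sure that after completing the square the residual on the $\xi_i^2$ ($i>l$) directions has the \emph{right sign and size}, i.e.\ is bounded by $\delta_0 S_k^{ii}/(\la_1 S_k)$ rather than merely by a constant times $1/\la_1^2$; this is where the hypothesis $\la_{l+1}\le\delta'\la_1$ (rather than just $\la_l\ge\delta\la_1$) is essential, since it is what forces the $S_l$-weights on the tail to be small relative to the $S_k$-weights.
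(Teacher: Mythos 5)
Your overall architecture coincides with the paper's: apply Lemma \ref{lem2.4} with $\xi$ in place of $u_{\cdot\cdot h}$ to reduce to an inequality at the $S_l$ level, use the pinching $\lambda_l\ge\delta\lambda_1$, $\lambda_{l+1}\le\delta'\lambda_1$ to show $\lambda_1S_l^{11}\ge(1-C\delta'/\delta)S_l$ and hence extract the main term $(1+\vartheta-\delta\vartheta-\epsilon)\xi_1^2/\lambda_1^2$, and finally absorb the $i>l$ contributions into $-\delta_0\sum_{i>l}S_k^{ii}\xi_i^2/(\lambda_1S_k)$ via Lemma \ref{lem2.3}(ii), which gives $\lambda_1S_k^{ii}\ge\theta S_k/\delta'$ for $i>l$. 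Those steps match the paper's (3.1), (3.10), (3.11), (3.12).

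The gap is in your steps (3)--(4), the part you yourself flag as delicate. The paper does not complete squares term by term; it expands $\left(\sum_iS_l^{ii}\xi_i\right)^2$ and groups the off-diagonal contribution together with the Hessian term into the single quadratic form $\sum_{p\ne q}(S_l^{pp}S_l^{qq}-S_lS_l^{pp,qq})\xi_p\xi_q$, for which it invokes the claim
$$\sum_{p\neq q}(S_l^{pp}S_l^{qq}-S_lS_l^{pp,qq})\xi_p\xi_q\ge-\frac{\epsilon}{2}\sum_{i \le l}(S_l^{ii}\xi_i)^2-\frac{C}{\epsilon}\sum_{i>l}(S_l^{ii}\xi_i)^2,$$
trivial for $l=1$ but resting on the nontrivial computations (4.15)--(4.21) of \cite{LR} for $2\le l\le k-2$. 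Your sketch does not supply a substitute for this. Concretely: a naive Cauchy--Schwarz on the cross terms with $p,q\in\{1,\dots,l\}$ produces error terms of size $\xi_i^2/\lambda_1^2$ for $2\le i\le l$, and neither of the two sinks you name can take them --- the $\delta_0$-term only covers $i>l$, and the second-derivative term $-\sum_{p\ne q}S_l^{pp,qq}\xi_p\xi_q$ is purely off-diagonal, hence contributes no positive $\xi_i^2$. The only available positivity there is the diagonal part $(1+\vartheta-\delta\vartheta)\sum_{2\le i\le l}(S_l^{ii}\xi_i)^2$ of the expanded square, and showing that it dominates is exactly the content of the displayed claim, i.e.\ the part of the proof that must be done carefully rather than by generic completion of squares. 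A second, smaller inaccuracy: for $i>l$ you assert $S_l^{ii}/S_l\approx 1/\lambda_i$; in fact $\lambda_iS_l^{ii}=S_l-S_l(\lambda|i)$ and $S_l(\lambda|i)$ is comparable to $S_l$ when $i>l$, so this identity gives nothing, and the correct (and needed) bound is $S_l^{ii}/S_l\le C/\lambda_l\le C/(\delta\lambda_1)$, obtained as in the paper's (3.11) from Lemma \ref{lem2.2}(ii) and Lemma \ref{lem2.3}(i). With that bound the tail absorption does go through exactly as you intend.
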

	\begin{proof}
		If $\al=0$, that is $S_k=\sigma_{k-1}$, we just need to modify Lemma 3.1 in \cite{LSY}, so we assume $\al>0$. By Lemma \ref{lem2.4}, we have
		\begin{align}\label{3.1}
			&-\sum\limits_{p\ne q}\frac{S_k^{pp,qq}\xi_p\xi_q}{S_k}+(1-\vartheta+\frac{\vartheta}{\delta})\frac{\left(\sum\limits_{i}S_k^{ii}\xi_i \right)^2}{S_k^2}\notag\\
			\ge&(1+\vartheta-\delta\vartheta)\left(\frac{\sum\limits_iS_l^{ii}\xi_i}{S_l}\right)^2-\sum\limits_{p\ne q} \frac{S_l^{pp,qq}\xi_p\xi_q}{S_l}\notag\\
			\ge&\frac{1}{S_l^2}\left((1+\vartheta-\delta\vartheta)\sum\limits_i(S_l^{ii}\xi_i)^2+\sum\limits_{p\ne q}(S_l^{pp}S_l^{qq}-S_lS_l^{pp,qq})\xi_p\xi_q\right)
		\end{align}
		We claim that
		\begin{equation}\label{3.2}
			\sum\limits_{p\neq q}(S_l^{pp}S_l^{qq}-S_lS_l^{pp,qq})\xi_p\xi_q\ge-\displaystyle\frac{\epsilon}{2}\sum\limits_{i \le l}(S_l^{ii}\xi_i)^2-\frac{C}{\epsilon}\sum\limits_{i>l}(S_l^{ii}\xi_i)^2.
		\end{equation}
		For $l=1$, we have $S_1=\sigma_0+\alpha\sigma_1=1+\alpha(\lambda_1+\cdots+\lambda_n)$, so 
		\eqref{3.2} become 
		$$\sum\limits_{p\neq q}\xi_p\xi_q\ge-\displaystyle\frac{\epsilon}{2}\xi_1^2-\frac{C}{\epsilon}\sum\limits_{i>1}\xi_i^2.$$
		Since
		\begin{align}\label{bds1}
			\sum\limits_{p\neq q}|\xi_p\xi_q|
			&=2\sum_{i=2}^n|\xi_1\xi_i|+\sum_{p\ne q;p,q\ne 1}|\xi_p\xi_q|\notag\\
			&\le\sum_{i=2}^n\left(\frac{\epsilon}{2(n-1)}\xi_1^2+\frac{2(n-1)}{\epsilon}\xi_i^2\right)+C\sum_{i>1}\xi_i^2\notag\\
			&\le \frac{\epsilon}{2}\xi_1^2+\frac{C}{\epsilon}\sum_{i>1}\xi_i^2,
		\end{align}
		we have
		$$	\sum\limits_{p\neq q}\xi_p\xi_q\ge 	-\sum\limits_{p\neq q}|\xi_p\xi_q|\ge -\displaystyle\frac{\epsilon}{2}\xi_1^2-\frac{C}{\epsilon}\sum\limits_{i>1}\xi_i^2. $$
		Thus the claim \eqref{3.2} holds for $l=1$.\\
		For $2\le l\le k-2$, following the formulas (4.15-4.21) in \cite{LR}, we can also get the claim \eqref{3.2}.
		
		By \eqref{3.1} and \eqref{3.2}, we have
		\begin{align}\label{3.9}
			&-\sum\limits_{p\ne q}\frac{S_k^{pp,qq}\xi_p\xi_q}{S_k}+(1-\vartheta+\frac{\vartheta}{\delta})\frac{\left(\sum\limits_{i}S_k^{ii}\xi_i \right)^2}{S_k^2}\notag\\
			\ge&\frac{1}{S_l^2} \left[(1+\vartheta-\delta\vartheta)\sum\limits_{i\le l}\left(S_l^{ii}\xi_i \right)^2-\displaystyle\frac{\epsilon}{2}\sum\limits_{i \le l}(S_l^{ii}\xi_i)^2-\frac{C}{\epsilon}\sum\limits_{i>l}(S_l^{ii}\xi_i)^2 \right]\notag\\
			\ge&\frac{1}{S_l^2}\left(1+\vartheta-\delta\vartheta-\frac{\epsilon}{2}\right)\left(S_l^{11}\xi_1\right)^2-\frac{C}{\epsilon}\sum\limits_{i>l}\left(\frac{S_l^{ii}}{S_l}\right)^2\xi_i^2.
		\end{align}
		For $p\le l$, by choosing $\delta^{'}$ sufficiently small and $\lambda_1$ sufficiently large, we have
		\begin{align*}
			S_l(\lambda |p)&=\sigma_{l-1}(\lambda|p)+\alpha\sigma_{l}(\lambda|p)\le C\frac{\lambda_1\cdots\lambda_{l}}{\lambda_p}+C\alpha\frac{\lambda_1\cdots\lambda_{l+1}}{\lambda_p}\\
			&\le C\frac{\lambda_1\cdots\lambda_l(1+\al\lambda_{l+1})}{\lambda_p}\le C\frac{S_l(1+\al\delta^{'}\lambda_1)}{\delta\lambda_1}\le C\frac{\delta^{'}}{\delta}S_l,
		\end{align*}
		hence,
		$$\lambda_pS_l^{pp}=S_l-S_l(\lambda|p)\ge(1-\frac{C\delta^{'}}{\delta})S_l, \forall p\le l.$$
		So we have
		\begin{align}\label{3.10}
			\left(1+\vartheta-\delta\vartheta-\frac{\epsilon}{2}\right)(S_l^{11})^2&\ge\left(1+\vartheta-\delta\vartheta-\frac{\epsilon}{2}\right)(1-\frac{C\delta^{'}}{\delta})^2\frac{ S_l^2}{\lambda_1^2}\notag\\
			&\ge(1+\vartheta-\delta\vartheta-\epsilon)\frac{ S_l^2}{\lambda_1^2},
		\end{align}
		by choosing $\delta^{'}$ sufficiently small.\\
		Using Lemma 2.2 (ii) and Lemma 2.3 (i), we have $\forall i>l$,
		$$S_l^{ii}=\sigma_{l-2}(\lambda|i)+\alpha\sigma_{l-1}(\lambda|i)\le C\lambda_1\cdots\lambda_{l-2}+C\alpha\lambda_1\cdots\lambda_{l-1} ,  $$
		$$S_l\ge \frac{1}{2}(\lambda_1\cdots\lambda_{l-1}+\alpha\lambda_1\cdots\lambda_{l}).$$
		Hence, if $i>l$,
		\begin{align}\label{3.11}
			\left(\frac{S_l^{ii}}{S_l}\right)^2&\le
			C\left(\frac{\lambda_1\cdots\lambda_{l-2}+\alpha\lambda_1\cdots\lambda_{l-1}}{\frac{1}{2}(\lambda_1\cdots\lambda_{l-1}+\alpha\lambda_1\cdots\lambda_{l})}\right)^2\notag\\&\le C\left(\frac{1+\lambda_{l-1}}{\lambda_{l-1}+\lambda_{l-1}\lambda_{l}}\right)^2\le C\left(\frac{2\lambda_{l-1}}{\lambda_{l-1}\lambda_{l}}\right)^2\le\frac{C}{\lambda_{l}^2}\le\displaystyle\frac{C}{\delta^2\lambda_1^2}.
		\end{align}
		By Lemma 2.3 (ii), $\forall i>l$, if $l<i\le k-1$,
		$$\lambda_1 S_k^{ii} \ge \lambda_1\frac{\theta S_k}{\lambda_i}\ge\lambda_1\frac{\theta S_k}{\lambda_{l+1}}\ge \frac{\theta S_k}{\delta^{'}},$$
		if $i\ge k$,
		$$\lambda_1 S_k^{ii}\ge \lambda_1S_k^{k-1,k-1} \ge \lambda_1\frac{\theta S_k}{\lambda_{k-1}}\ge\lambda_1\frac{\theta S_k}{\lambda_{l+1}}\ge \frac{\theta S_k}{\delta^{'}}.$$
		So choosing $\delta^{'}$ sufficiently small, we have $\forall i>l$,
		\begin{equation}\label{3.12}
			\frac{C}{\epsilon\delta^2 \delta_0}S_k\le \lambda_1 S_k^{ii}.
		\end{equation}
		Finally, plugging \eqref{3.10}, \eqref{3.11} and \eqref{3.12} into \eqref{3.9}, we have
		\begin{align*}
			&-\sum\limits_{p\ne q}\frac{S_k^{pp,qq}\xi_p\xi_q}{S_k}+(1-\vartheta+\frac{\vartheta}{\delta})\frac{\left(\sum\limits_{i}S_k^{ii}\xi_i \right)^2}{S_k^2}\\
			\ge&(1+\vartheta-\delta\vartheta-\epsilon)\frac{\xi_1^2}{\lambda_1^2}-\frac{C}{\epsilon\delta^2}\sum\limits_{i>l}\frac{\xi_i^2}{\lambda_1^2}\\
			\ge&(1+\vartheta-\delta\vartheta-\epsilon)\frac{\xi_1^2}{\lambda_1^2}-\delta_0 \sum\limits_{i>l}\frac{S_k^{ii}\xi_i^2}{\lambda_1 S_k}.
		\end{align*}
		The concavity inequality is now proved.
		
	\end{proof}

\subsection{A concavity inequality about $S_n$}
\begin{lemma}\label{lem3.1}
	Let $\la=(\la_1,\la_2,\cdots,\la_n)\in\mathbb{R}^{n}$, then we have\\
	(i)$$S_n^{jj}(-S_n^{jj}+2\la_1S_n^{11,jj}+S_n^{11})=\la_1^2S_{n-2}^2(\la|1j)+S_nS_{n-2}(\la|1j)$$
	(ii)\begin{align*}
		&\la_1(S_n^{pp}S_n^{11,qq}+S_n^{qq}S_n^{11,pp}-S_n^{11}S_n^{pp,qq})-S_n^{pp}S_n^{qq}\\
		=&\la_1^2\sigma_{n-3}^2(\la|1pq)-S_n(\la)\sigma_{n-3}(\la|1pq)
	\end{align*}
	(iii)$$-\la_1^2S_n^{11,pp}S_n^{11,qq}+\la_1S_n^{11}S_n^{pp,qq}=-\la_1^2\sigma_{n-3}^2(\la|1pq)+\la_1	S_{n-1}(\la|1)\sigma_{n-3}(\la|1pq)$$
\end{lemma}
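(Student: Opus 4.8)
All three identities are purely algebraic, and the plan is to prove them by a single reduction. First I would rewrite every derivative of $S_n$ on the left-hand sides via Lemma~\ref{lem2.1}, so that only the quantities $S_{n-1}(\la|i)$, $S_{n-2}(\la|ij)$, $S_{n-3}(\la|1pq)$ together with the top polynomials $\sigma_{n-2}(\la|1j)$, $\sigma_{n-3}(\la|1pq)$ occur. The structural fact to exploit is that $S_n=\sigma_{n-1}+\al\sigma_n$ and each of these functions is multilinear (affine of degree one) in every single variable $\la_s$; hence the recursion $S_m(\mu)=\mu_s S_m^{ss}(\mu)+S_m(\mu|s)$ of Lemma~\ref{lem2.1}(iii), applied once for each of the indices $1,p,q$ (resp.\ $1,j$ in (i)) and combined with the vanishing of $\sigma_m$ on an $(m-1)$-tuple, rewrites everything in terms of $\la_1,\la_p,\la_q$ and two ``core'' scalars $h:=S_{n-3}(\la|1pq)$, $e:=\sigma_{n-3}(\la|1pq)$ (or $c:=S_{n-2}(\la|1j)$, $d:=\sigma_{n-2}(\la|1j)$ for (i)).

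Carrying out this peeling I expect the reductions
\begin{align*}
S_n^{11,pp}&=\la_q h+e,\qquad S_n^{11,qq}=\la_p h+e,\qquad S_n^{pp,qq}=\la_1 h+e,\\
S_n^{11}&=\la_p\la_q h+(\la_p+\la_q)e,\quad S_n^{pp}=\la_1\la_q h+(\la_1+\la_q)e,\quad S_n^{qq}=\la_1\la_p h+(\la_1+\la_p)e,\\
S_n&=\la_1\la_p\la_q h+(\la_1\la_p+\la_1\la_q+\la_p\la_q)e,
\end{align*}
and $S_n^{jj}=\la_1c+d$, $S_n^{11}=\la_jc+d$, $S_n=\la_1\la_jc+(\la_1+\la_j)d$ in (i). Feeding these into the left-hand sides: for (ii) and (iii) every monomial carrying $h^2$ cancels and the remainder collapses to $-\la_1^2e^2+\la_1 e\,S_n^{11}$ in (iii) and to $-\la_1\la_p\la_q he+(\la_1^2-\la_1\la_p-\la_1\la_q-\la_p\la_q)e^2$ in (ii), which are precisely $-\la_1^2\sigma_{n-3}^2(\la|1pq)+\la_1 S_{n-1}(\la|1)\sigma_{n-3}(\la|1pq)$ and $\la_1^2\sigma_{n-3}^2(\la|1pq)-S_n\sigma_{n-3}(\la|1pq)$; for (i) one has $-S_n^{jj}+2\la_1 S_n^{11,jj}+S_n^{11}=(\la_1+\la_j)c$, so the left side equals $(\la_1c+d)(\la_1+\la_j)c=\la_1^2c^2+S_nc$, as claimed.

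The only genuine work is the middle step: bookkeeping the coefficients while peeling off the indices and tracking which $\sigma_m$'s vanish for dimension reasons — this is where a dropped cross term or sign would hide, so I would double-check the reductions carefully (for instance by testing $n=3$, where $e,c$ reduce to $1$ or $0$). Once those reduced formulas are in hand, each of (i)--(iii) is a one-line polynomial verification with no analytic content, so I do not anticipate any real obstacle; I would only add a short remark on the degenerate small-$n$ cases, where $\la|1pq$ is empty and the identities hold by the usual conventions.
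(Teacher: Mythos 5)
The paper states Lemma \ref{lem3.1} without any proof, so your computation is not merely an alternative route --- it supplies the missing argument. Your reduction is correct: writing $h=S_{n-3}(\la|1pq)$, $e=\sigma_{n-3}(\la|1pq)$ (resp.\ $c=S_{n-2}(\la|1j)$, $d=\sigma_{n-2}(\la|1j)$) and peeling with $S_m(\mu)=\mu_sS_m^{ss}(\mu)+S_m(\mu|s)$ together with $\sigma_m(\la|1pq)=0$ for $m\ge n-2$ does give exactly the reduced forms you list (e.g.\ $S_n^{pp,qq}=\la_1h+e$, $S_n=\la_1\la_p\la_qh+(\la_1\la_p+\la_1\la_q+\la_p\la_q)e$), and substituting them I confirm the $h^2$-terms cancel in (ii) and (iii), that (ii) collapses to $-\la_1\la_p\la_q he+(\la_1^2-\la_1\la_p-\la_1\la_q-\la_p\la_q)e^2$ and (iii) to $-\la_1^2e^2+\la_1 e\,S_n^{11}$, both matching the stated right-hand sides, while in (i) the bracket equals $(\la_1+\la_j)c$ and the identity follows in one line. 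The only thing left to make this a finished proof is to write out the bookkeeping you describe as "the middle step"; as verified above it goes through without surprises, including the degenerate small-$n$ cases under the convention $\sigma_0=1$, $\sigma_{-1}=0$.
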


\begin{lemma}\label{lem3.2}
	If $\lambda\in\widetilde{
		\Gamma}_n$, $\lambda_1\geq\cdots\geq\lambda_n$ then this quadratic form
	$$\sum\limits_{j>1}S_{n-2}(\la|1j)\xi_j^2-\sum\limits_{p,q>1;p\neq q}\sigma_{n-3}(\la|1pq)\xi_p\xi_q\ge0.$$
\end{lemma}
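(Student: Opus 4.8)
The plan is to exploit the algebraic identities assembled in Lemma \ref{lem3.1}, which convert the mixed second derivatives of $S_n$ (evaluated with one index fixed at $1$) into products of $\sigma_{n-3}$'s plus lower-order terms. Concretely, identity (iii) of Lemma \ref{lem3.1} reads $-\la_1^2 S_n^{11,pp}S_n^{11,qq}+\la_1 S_n^{11}S_n^{pp,qq}=-\la_1^2\sigma_{n-3}^2(\la|1pq)+\la_1 S_{n-1}(\la|1)\sigma_{n-3}(\la|1pq)$, and identity (i) packages $S_n^{jj}$ against $S_n^{11,jj}$ and $S_n^{11}$. Since $S_n^{11}=S_{n-1}(\la|1)=\sigma_{n-2}(\la|1)+\al\sigma_{n-1}(\la|1)$ and $S_n^{11,jj}=S_{n-2}(\la|1j)=\sigma_{n-3}(\la|1j)+\al\sigma_{n-2}(\la|1j)$, the quadratic form in the statement is (up to the positive factor $S_n^{11}$, which is positive because $\la\in\widetilde\Gamma_n\subset\Gamma_{n-1}$ forces $(\la|1)\in\Gamma_{n-1}$, hence $\sigma_{n-2}(\la|1)>0$) comparable to a form built out of exactly these quantities. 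So the first step is to multiply the target quadratic form by $S_n^{11}>0$ and rewrite it, using Lemma \ref{lem3.1}, as a combination of the manifestly structured pieces.

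Next I would diagonalize: because $S_n^{11,jj}$ plays the role of the diagonal coefficient and $\sigma_{n-3}(\la|1pq)$ the off-diagonal coefficient, the natural move is to recognize the form $\sum_{j>1}S_{n-2}(\la|1j)\xi_j^2-\sum_{p\ne q>1}\sigma_{n-3}(\la|1pq)\xi_p\xi_q$ as (a piece of) the Hessian of $S_{n-1}(\la|1)$ — more precisely of $\sigma_{n-2}(\la|1)+\al\sigma_{n-1}(\la|1)$ — restricted to the variables $\xi_2,\dots,\xi_n$. Indeed $\partial^2\sigma_{n-1}(\la|1)/\partial\la_p\partial\la_q=\sigma_{n-3}(\la|1pq)$ for $p\ne q$ (both $>1$) and the diagonal second derivative vanishes, while $\partial^2\sigma_{n-2}(\la|1)/\partial\la_p\partial\la_q=\sigma_{n-4}(\la|1pq)$. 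This suggests the cleaner route: show directly that $-D^2\bigl(\sigma_{n-1}(\la|1)^{1/(n-1)}\bigr)\ge 0$ on $\Gamma_{n-1}$ (concavity of the $(n-1)$-st root of $\sigma_{n-1}$ on its Gårding cone, applied to the $(n-1)$ variables $\la_2,\dots,\la_n$ with $\la_1$ frozen), expand the concavity inequality, and read off that the bracketed combination $\sigma_{n-1}(\la|1)\cdot(\text{Hessian}) - (1-\tfrac{1}{n-1})(\text{gradient})^{\otimes2}\le 0$ type estimate forces the sign we want after the $\al$-terms are incorporated by the analogous concavity of $\sigma_{n-2}(\la|1)^{1/(n-2)}$.

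The main obstacle I anticipate is handling the coupling between the $\sigma_{n-2}$ and $\sigma_{n-1}$ pieces (the $\al$-dependence): individually each root is concave, but the target form is not a single root, and naively adding the two concavity inequalities produces cross terms in $\xi$ that are not obviously controlled. The resolution should be the identities of Lemma \ref{lem3.1}(i)–(iii) themselves, which are presumably tailored precisely so that after multiplication by $S_n^{11}$ the cross terms cancel and one is left with $\la_1^2\sigma_{n-3}^2(\la|1pq)\ge 0$ plus terms of the form $S_n(\la)\cdot\sigma_{n-3}(\la|1pq)$, where $S_n(\la)=\sigma_{n-1}(\la)+\al\sigma_n(\la)>0$ on $\widetilde\Gamma_n$ and $\sigma_{n-3}(\la|1pq)\ge 0$ because $(\la|1pq)$ inherits membership in $\Gamma_{n-3}$ from $\la\in\Gamma_{n-1}$ (using that deleting the largest entries preserves lower Gårding cones). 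Assembling: after the substitution the quadratic form becomes a sum of perfect squares $\sum(\text{linear in }\xi)^2$ with nonnegative coefficients, hence $\ge 0$; dividing back by $S_n^{11}>0$ gives the claim. I would carry out the square-completion explicitly in the two-variable slices $(p,q)$, since the form is, for each unordered pair, a $2\times2$ block plus diagonal, and positivity of a sum of such blocks is what Lemma \ref{lem3.1}(ii) is set up to certify via $\la_1^2\sigma_{n-3}^2-S_n\sigma_{n-3}$ having the right sign when $\la_1$ dominates.
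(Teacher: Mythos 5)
Your proposal has a genuine gap at the step where you need the coefficients to have a sign: you assert that $\sigma_{n-3}(\la|1pq)\ge 0$ because $(\la|1pq)$ ``inherits membership in $\Gamma_{n-3}$.'' This is false. From $\la\in\widetilde\Gamma_n\subset\Gamma_{n-1}$ one only gets $(\la|1pq)\in\Gamma_{n-4}$, and $\sigma_{n-3}(\la|1pq)$ is the \emph{top-order} symmetric function of the $n-3$ remaining entries, i.e.\ their product $\prod_{r\notin\{1,p,q\}}\la_r$. When $1<p,q<n$ this product contains the factor $\la_n$, which may well be negative on $\widetilde\Gamma_n$ (it is only bounded below by $-1/\al$). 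So the off-diagonal coefficients of the quadratic form are genuinely sign-indefinite, and the ``sum of perfect squares plus terms $S_n\cdot\sigma_{n-3}(\la|1pq)\ge 0$'' conclusion does not follow. This sign-indefiniteness is precisely the difficulty the lemma is designed to overcome, so the proposal fails exactly at the crux.

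There is also a structural confusion: the identities of Lemma \ref{lem3.1} and the multiplication by $S_n^{11}$ (more precisely by $\la_1 K(S_n^{11})^2-S_n^{11}$) are the machinery of the proof of Lemma \ref{lem3.3}, and in that proof the terms $S_n(\la)\sigma_{n-3}(\la|1pq)\xi_p\xi_q$ are controlled by \emph{invoking} Lemma \ref{lem3.2}, while the squared terms $\la_1^2\sigma_{n-3}^2(\la|1pq)\xi_p\xi_q$ are handled by the Schur product theorem (Remark \ref{rema3.1}), not by completing squares. Using that machinery to prove Lemma \ref{lem3.2} is therefore circular. The paper's actual proof of Lemma \ref{lem3.2} works differently: it isolates the index $n$ (the only possibly negative eigenvalue), multiplies the form by $S_{n-2}(\la|1n)>0$, absorbs the $\xi_n$ cross terms by the elementary inequality $x^2-2xy\ge -y^2$ together with the product identity $\sigma_{n-3}(\la|1pn)\sigma_{n-3}(\la|1qn)=\sigma_{n-2}(\la|1n)\sigma_{n-4}(\la|1pqn)$, and is then left with a reduced form whose coefficients $\sigma_{n-4}(\la|1pqn)$ are all positive and satisfy the row-sum condition, so a diagonal-dominance argument finishes. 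If you want to salvage your approach, you must first perform some such reduction that removes $\la_n$ from the off-diagonal coefficients before any positivity can be claimed.
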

\begin{proof}
	A  straightforward calculation shows
	\begin{align}\label{y3.1}
		&	\sum\limits_{j>1}S_{n-2}(\la|1j)\xi_j^2-\sum\limits_{p,q>1;p\neq q}\sigma_{n-3}(\la|1pq)\xi_p\xi_q\notag\\
		=&\sum\limits_{1<j<n}S_{n-2}(\la|1j)\xi_j^2-\sum\limits_{1<p,q<n;p\neq q}\sigma_{n-3}(\la|1pq)\xi_p\xi_q\notag\\
		&+S_{n-2}(\la|1n)\xi_n^2-2\sum\limits_{1<j<n}\sigma_{n-3}(\la|1jn)\xi_j\xi_n.
	\end{align}	
	For $1<j<n; 1<p,q<n; p\ne q$, we have
	\begin{align}\label{y3.2}
		&S_{n-2}(\la|1j)S_{n-2}(\la|1n)\notag\\
		=&\big[\la_nS_{n-3}(\la|1jn)+\sigma_{n-3}(\la|1jn)\big]S_{n-2}(\la|1n)\notag\\
		=&S_{n-3}(\la|1jn)\big[S_{n-1}(\la|1)-\sigma_{n-2}(\la|1n)\big]+\sigma_{n-3}(\la|1jn)S_{n-2}(\la|1n)\notag\\
		=&S_{n-3}(\la|1jn)S_{n-1}(\la|1)-\la_jS_{n-3}(\la|1jn)\sigma_{n-3}(\la|1jn)\notag\\
		&+\sigma_{n-3}(\la|1jn)\big[\la_jS_{n-3}(\la|1jn)+\sigma_{n-3}(\la|1jn)\big]\notag\\
		=&S_{n-3}(\la|1jn)S_{n-1}(\la|1)+\sigma_{n-3}^2(\la|1jn),
	\end{align}
	\begin{align}\label{y3.3}
		\sigma_{n-3}(\la|1pq)S_{n-2}(\la|1n)&=\sigma_{n-4}(\la|1pqn)\la_nS_{n-2}(\la|1n)\notag\\
		&=\sigma_{n-4}(\la|1pqn)\big[S_{n-1}(\la|1)-\sigma_{n-2}(\la|1n)\big]\notag\\
		&=\sigma_{n-4}(\la|1pqn)S_{n-1}(\la|1)-\sigma_{n-4}(\la|1pqn)\sigma_{n-2}(\la|1n),
	\end{align}
	\begin{align}\label{y3.4}
		&S_{n-2}(\la|1n)\bigg[S_{n-2}(\la|1n)\xi_n^2-2\sum\limits_{1<j<n}\sigma_{n-3}(\la|1jn)\xi_j\xi_n\bigg]\notag\\
		=&S_{n-2}^2(\la|1n)\xi_n^2-2\left(\sum\limits_{1<j<n}\sigma_{n-3}(\la|1jn)\xi_j\right)S_{n-2}(\la|1n)\xi_n\notag\\
		\ge&-\left(\sum\limits_{1<j<n}\sigma_{n-3}(\la|1jn)\xi_j\right)^2\notag\\
		=&-\sum\limits_{1<j<n}\sigma_{n-3}^2(\la|1jn)\xi_j^2-\sum\limits_{1<p,q<n;p\neq q}\sigma_{n-3}(\la|1pn)\sigma_{n-3}(\la|1qn)\xi_p\xi_q\notag\\
		=&-\sum\limits_{1<j<n}\sigma_{n-3}^2(\la|1jn)\xi_j^2-\sum\limits_{1<p,q<n;p\neq q}\sigma_{n-2}(\la|1n)\sigma_{n-4}(\la|1pqn)\xi_p\xi_q.
	\end{align}
	Plugging \eqref{y3.2} \eqref{y3.3} \eqref{y3.4} into \eqref{y3.1}, we have
	\begin{align*}
		&S_{n-2}(\la|1n)\bigg[\sum\limits_{j>1}S_{n-2}(\la|1j)\xi_j^2-\sum\limits_{p,q>1;p\neq q}\sigma_{n-3}(\la|1pq)\xi_p\xi_q\bigg]\\
		\ge&\sum\limits_{1<j<n}\big[S_{n-3}(\la|1jn)S_{n-1}(\la|1)+\sigma_{n-3}^2(\la|1jn)\big]\xi_j^2\\
		&-\sum\limits_{1<p,q<n;p\neq q}\big[\sigma_{n-4}(\la|1pqn)S_{n-1}(\la|1)-\sigma_{n-4}(\la|1pqn)\sigma_{n-2}(\la|1n)\big]\xi_p\xi_q\\
		&-\sum\limits_{1<j<n}\sigma_{n-3}^2(\la|1jn)\xi_j^2-\sum\limits_{1<p,q<n;p\neq q}\sigma_{n-2}(\la|1n)\sigma_{n-4}(\la|1pqn)\xi_p\xi_q\\
		=&S_{n-1}(\la|1)\biggl[\sum\limits_{1<j<n}S_{n-3}(\la|1jn)\xi_j^2-\sum\limits_{1<p,q<n;p\neq q}\sigma_{n-4}(\la|1pqn)\xi_p\xi_q\bigg]\\
		\ge&S_{n-1}(\la|1)\biggl[\sum\limits_{1<j<n}\sigma_{n-4}(\la|1jn)\xi_j^2-\sum\limits_{1<p,q<n;p\neq q}\sigma_{n-4}(\la|1pqn)\xi_p\xi_q\bigg]\\
		\ge&0.
	\end{align*}
	Here, in the last step, we have used this fact. If $(a_{ij})_{m\times m}$ is a real symmetric matrix of order $m$, and $a_{ij}>0$, $\forall i, a_{ii}=\sum\limits_{j\ne i}a_{ij}$, then
	\begin{align*}
		\sum\limits_{i\ne j}a_{ij}\xi_i\xi_j\le& \frac{1}{2}\sum\limits_{i\ne j}a_{ij}(\xi_i^2+\xi_j^2)\\
		=&\frac{1}{2}\sum\limits_{i\ne 1}a_{i1}(\xi_i^2+\xi_1^2)+\cdots+\frac{1}{2}\sum\limits_{i\ne m}a_{im}(\xi_i^2+\xi_m^2)\\
		=&\frac{1}{2}\sum\limits_{i\ne 1}a_{i1}\xi_i^2+\cdots+\frac{1}{2}\sum\limits_{i\ne m}a_{im}\xi_i^2+\frac{\xi_1^2}{2}a_{11}+\cdots+\frac{\xi_m^2}{2}a_{mm}\\
		=&\frac{1}{2}\left(\sum\limits_ia_{i1}\xi_i^2-a_{11}\xi_1^2\right)+\cdots+\frac{1}{2}\left(\sum\limits_ia_{im}\xi_i^2-a_{mm}\xi_m^2\right)\\
		&+\frac{\xi_1^2}{2}a_{11}+\cdots+\frac{\xi_m^2}{2}a_{mm}\\
		=&\frac{1}{2}\sum\limits_i(a_{i1}+\cdots+a_{im})\xi_i^2\\
		=&\sum\limits_ia_{ii}\xi_i^2.
	\end{align*}

\end{proof}

\begin{rema}\label{rema3.1}
	By Schur product theorem, we have
	\begin{equation*}
		\sum\limits_{j>1}S_{n-2}^2(\la|1j)\xi_j^2+\sum\limits_{p,q>1;p\neq q}\sigma_{n-3}^2(\la|1pq)\xi_p\xi_q\ge0.
	\end{equation*}
\end{rema}

\begin{lemma}\label{lem3.3}
	If $\lambda\in\widetilde{
		\Gamma}_n$, then $\forall \epsilon>0$, $\exists K(\epsilon)$, such that
	\begin{equation}\label{y1}
		\lambda_1\left(K\left(\sum\limits_jS_n^{jj}(\lambda)\xi_j\right)^2-S_n^{pp,qq}(\lambda)\xi_p\xi_q\right)-S_n^{11}(\lambda)\xi_1^2+(1+\epsilon)\sum\limits_{j> 1}S_n^{jj}\xi_j^2\ge0.
	\end{equation}
\end{lemma}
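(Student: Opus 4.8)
The plan is to read the left side of \eqref{y1} as a quadratic form $Q(\xi)$ on $\mathbb{R}^{n}$ and to prove $Q\geq0$ once $K=K(\epsilon)$ is chosen large. I take the index $1$ to be that of the largest eigenvalue, so $\lambda_{1}\geq\cdots\geq\lambda_{n}$; recall that then $\lambda_{1}>0$, at most $\lambda_{n}$ is negative, $\sigma_{n-2}(\lambda|j)=\partial_{j}\sigma_{n-1}(\lambda)>0$ for every $j$, and, writing $S_{n}=\sigma_{n-1}+\alpha\sigma_{n}$, that $S_{n}^{jj}=S_{n-1}(\lambda|j)$, $S_{n}^{11,jj}=S_{n-2}(\lambda|1j)$, $S_{n}^{pp,qq}=S_{n-2}(\lambda|pq)$ for $p\neq q$ and $S_{n}^{pp,pp}=0$. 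The elementary identities I will feed in are $S_{n}(\lambda)=\lambda_{1}S_{n}^{11}+\sigma_{n-1}(\lambda|1)$, $S_{n}^{jj}=\lambda_{1}S_{n-2}(\lambda|1j)+\sigma_{n-2}(\lambda|1j)$ (so $S_{n}^{jj}-\lambda_{1}S_{n}^{11,jj}=\sigma_{n-2}(\lambda|1j)$), and $\sigma_{n-2}(\lambda|1p)\sigma_{n-2}(\lambda|1q)=\sigma_{n-1}(\lambda|1)\,\sigma_{n-3}(\lambda|1pq)$.

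First I would isolate $\xi_{1}$. With $T=\sum_{j>1}S_{n}^{jj}\xi_{j}$, $U=\sum_{j>1}S_{n}^{11,jj}\xi_{j}$, $V=\sum_{p,q>1,\,p\neq q}S_{n}^{pp,qq}\xi_{p}\xi_{q}$, $W_{0}=\sum_{j>1}S_{n}^{jj}\xi_{j}^{2}$ and $W=(1+\epsilon)W_{0}$, one has $Q=A\xi_{1}^{2}+2B\xi_{1}+C$ with $A=\lambda_{1}K(S_{n}^{11})^{2}-S_{n}^{11}$, $B=\lambda_{1}KS_{n}^{11}T-\lambda_{1}U$ and $C=\lambda_{1}KT^{2}-\lambda_{1}V+W$. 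Since $\lambda_{1}S_{n}^{11}=\lambda_{1}\sigma_{n-2}(\lambda|1)+\alpha\sigma_{n}(\lambda)$ and, using $\lambda_{1}\geq\lambda_{j}$ together with the presence of at most one negative eigenvalue, $\lambda_{1}\sigma_{n-2}(\lambda|1)\geq\tfrac12\sigma_{n-1}(\lambda)$, one gets $\lambda_{1}S_{n}^{11}\geq\tfrac12 S_{n}(\lambda)>0$; hence, for $K$ large (depending also on a positive lower bound for $S_{n}(\lambda)$, available in the application as $f\geq m$), the leading coefficient $A=S_{n}^{11}(\lambda_{1}KS_{n}^{11}-1)\geq0$, and $Q\geq0$ for all $\xi_{1}$ is equivalent to $AC-B^{2}\geq0$ as a quadratic form in $\xi'=(\xi_{2},\dots,\xi_{n})$.

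Now expand $AC-B^{2}$. The $K^{2}$-terms cancel identically and, completing the square in $T$, one obtains
\[
AC-B^{2}=(K\lambda_{1}S_{n}^{11}-1)\,\Phi-P^{2},\qquad P=\sum_{j>1}\sigma_{n-2}(\lambda|1j)\xi_{j},
\]
with $\Phi=S_{n}^{11}W-\lambda_{1}S_{n}^{11}V+\lambda_{1}^{2}U^{2}-P^{2}=\Phi_{0}+\epsilon S_{n}^{11}W_{0}$ and $\Phi_{0}=S_{n}^{11}W_{0}-\lambda_{1}S_{n}^{11}V+\lambda_{1}^{2}U^{2}-P^{2}$. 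Applying Lemma \ref{lem3.1}(iii) to $\lambda_{1}^{2}U^{2}-\lambda_{1}S_{n}^{11}V$ makes the $V$-terms cancel, leaving only $S_{n-2}(\lambda|1j)$- and $\sigma_{n-3}(\lambda|1pq)$-terms; inserting the identities above, $\Phi_{0}$ should reorganize into $\lambda_{1}S_{n}^{11}$ times the form $\sum_{j>1}S_{n-2}(\lambda|1j)\xi_{j}^{2}-\sum_{p,q>1,\,p\neq q}\sigma_{n-3}(\lambda|1pq)\xi_{p}\xi_{q}$ of Lemma \ref{lem3.2} (which is $\geq0$), plus $\lambda_{1}^{2}$ times the Schur-positive form of Remark \ref{rema3.1} (also $\geq0$), plus a remainder $R=S_{n}^{11}\sum_{j>1}\sigma_{n-2}(\lambda|1j)\xi_{j}^{2}-P^{2}$, which is $\geq0$ because $\sum_{j>1}\sigma_{n-2}(\lambda|1j)=\sigma_{n-2}(\lambda|1)\leq S_{n}^{11}$ and Cauchy--Schwarz. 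Thus $\Phi_{0}\geq0$, hence $\Phi\geq\epsilon S_{n}^{11}W_{0}$; since moreover $P^{2}\leq S_{n}^{11}W_{0}$ (again Cauchy--Schwarz), enlarging $K$ (e.g. $K\geq2(1+\epsilon^{-1})/m$) gives $(K\lambda_{1}S_{n}^{11}-1)\Phi\geq P^{2}$, i.e. $AC-B^{2}\geq0$, which proves \eqref{y1}.

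The step I expect to be the main obstacle is the reorganization of $\Phi_{0}$: forcing the $V$-terms to cancel via Lemma \ref{lem3.1} and then peeling off \emph{exactly} the Lemma \ref{lem3.2} and Remark \ref{rema3.1} forms together with a genuinely nonnegative remainder, carried out so that a single $K=K(\epsilon)$ works uniformly in $\lambda$ — which is precisely why the $(1+\epsilon)$-weight is built into $W$. The other delicate point is the case $\lambda_{n}<0$, where several of $\sigma_{n-2}(\lambda|1j)$ and $\sigma_{n-1}(\lambda|1)$ change sign; there one replaces the bound on $\lambda_{1}S_{n}^{11}$ by the sharper $\lambda_{1}S_{n}^{11}>S_{n}(\lambda)$ and controls the sign-changing terms using Lemma \ref{lem2.2}.
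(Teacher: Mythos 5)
Your route is the paper's own: view the left side as a quadratic in $\xi_1$, reduce to nonnegativity of $AC-B^2$ after multiplying by the positive leading coefficient $A=S_n^{11}(K\lambda_1S_n^{11}-1)$, convert the resulting coefficients with Lemma \ref{lem3.1}, and recognize the nonnegative forms of Lemma \ref{lem3.2} and Remark \ref{rema3.1}, with $K$ large in terms of $\epsilon$ and the lower bound for $S_n$. The one step you flagged as uncertain resolves more cleanly than you propose: using $S_n(\lambda)=\lambda_1S_n^{11}+\sigma_{n-1}(\lambda|1)$ together with $\sigma_{n-2}(\lambda|1p)\sigma_{n-2}(\lambda|1q)=\sigma_{n-1}(\lambda|1)\sigma_{n-3}(\lambda|1pq)$, one finds that $\Phi_0$ equals \emph{exactly} $\lambda_1^2$ times the Remark \ref{rema3.1} form plus $S_n(\lambda)$ (not $\lambda_1S_n^{11}$) times the Lemma \ref{lem3.2} form, with no remainder $R$; this matters because your proposed justification of $R\ge0$ (via $\sigma_{n-2}(\lambda|1)\le S_n^{11}$ and Cauchy--Schwarz with the weights $\sigma_{n-2}(\lambda|1j)$) can fail when $\lambda_n<0$, and likewise the bound $P^2\le S_n^{11}W_0$ is not justified as stated. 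With the exact decomposition the only thing left to absorb is the outer $-P^2$, which is precisely what the $(1+\epsilon)$-weight and the largeness condition on $K$ (the paper's $K\theta S_n\epsilon>1+\epsilon$ and $S_n-1/K\ge0$) are for.
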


\begin{proof}
	A straightforward calculation shows
	\begin{align}\label{y2}
		&\lambda_1\left(K\left(\sum\limits_jS_n^{jj}(\lambda)\xi_j\right)^2-S_n^{pp,qq}(\lambda)\xi_p\xi_q\right)-S_n^{11}(\lambda)\xi_1^2+(1+\epsilon)\sum\limits_{j> 1}S_n^{jj}\xi_j^2\notag\\
		=&\la_1K\left(\sum\limits_{j> 1}S_n^{jj}(\lambda)\xi_j\right)^2+2\la_1\xi_1\left[\sum\limits_{j>1}(KS_n^{11}S_n^{jj}-S_n^{11,jj})\xi_j\right]\notag\\
		&+\left[\la_1K(S_n^{11})^2-S_n^{11}\right]\xi_1^2+(1+\epsilon)\sum\limits_{j> 1}S_n^{jj}\xi_j^2-\la_1\sum\limits_{p>1,q>1}S_n^{pp,qq}(\lambda)\xi_p\xi_q\notag\\
		\ge&\la_1K\left(\sum\limits_{j> 1}S_n^{jj}(\lambda)\xi_j\right)^2-\frac{\la_1^2\bigg[\sum\limits_{j>1}(KS_n^{11}S_n^{jj}-S_n^{11,jj})\xi_j\bigg]^2}{\la_1K(S_n^{11})^2-S_n^{11}}\notag\\
		&+(1+\epsilon)\sum\limits_{j> 1}S_n^{jj}\xi_j^2-\la_1\sum\limits_{p>1,q>1}S_n^{pp,qq}(\lambda)\xi_p\xi_q\notag\\
		=&\sum\limits_{j>1}\left[\la_1K(S_n^{jj})^2-\frac{\la_1^2(KS_n^{11}S_n^{jj}-S_n^{11,jj})^2}{\la_1K(S_n^{11})^2-S_n^{11}}+(1+\epsilon)S_n^{jj}\right]\xi_j^2+\notag\\
		&\sum\limits_{p>1,q>1,p\ne q}\bigg[\la_1KS_n^{pp}S_n^{qq}-\frac{\la_1^2(KS_n^{11}S_n^{pp}-S_n^{11,pp})(KS_n^{11}S_n^{qq}-S_n^{11,qq})}{\la_1K(S_n^{11})^2-S_n^{11}}\notag\\
		&-\la_1S_n^{pp,qq}\bigg]\xi_p\xi_q.
	\end{align}
	where, in the second step, we have used the Cauchy inequality with $\epsilon$, that is
	$$2\la_1\xi_1\left[\sum\limits_{j>1}(KS_n^{11}S_n^{jj}-S_n^{11,jj})\xi_j\right]\ge -\frac{1}{\epsilon}\la_1^2\left[\sum\limits_{j>1}(KS_n^{11}S_n^{jj}-S_n^{11,jj})\xi_j\right]^2-\epsilon\xi_1^2,$$
	and let $\epsilon=\la_1K(S_n^{11})^2-S_n^{11}.$\\
	Choosing $K$ is sufficiently large, we have
	$$K\la_1S_n^{11}-1\ge K\theta S_n-1>0,$$
	Thus, we can multiple the term $S_n^{11}(K\la_1S_n^{11}-1)$ in \eqref{y2} and combining Lemma \eqref{lem3.1}. Then, we obtain
	\begin{align*}
		&(\la_1K(S_n^{11})^2-S_n^{11})\\
		&\times\left[\lambda_1\left(K\left(\sum\limits_jS_n^{jj}(\lambda)\xi_j\right)^2-S_n^{pp,qq}(\lambda)\xi_p\xi_q\right)-S_n^{11}(\lambda)\xi_1^2+(1+\epsilon)\sum\limits_{j> 1}S_n^{jj}\xi_j^2\right]\\
		\ge&\sum\limits_{j>1}\bigg[\la_1KS_n^{11}S_n^{jj}(-S_n^{jj}+2\la_1S_n^{11,jj}+(1+\epsilon)S_n^{11})-\la_1^2(S_n^{11,jj})^2-(1+\epsilon)S_n^{11}S_n^{jj}\bigg]\xi_j^2\\
		&+\sum\limits_{p>1,q>1,p\ne q}\bigg[\la_1KS_n^{11}(\la_1(S_n^{pp}S_n^{11,qq}+S_n^{qq}S_n^{11,pp}-S_n^{11}S_n^{pp,qq})-S_n^{pp}S_n^{qq})\\
		&-\la_1^2S_n^{11,pp}S_n^{11,qq}+\la_1S_n^{11}S_n^{pp,qq}\bigg]\xi_p\xi_q\\
		\ge&\sum\limits_{j>1}\bigg[\la_1KS_n^{11}(\la_1^2S_{n-2}^2(\la|1j)+S_nS_{n-2}(\la|1j)+\epsilon S_n^{jj}S_n^{11})\\
		&-\la_1^2(S_n^{11,jj})^2-(1+\epsilon)S_n^{11}S_n^{jj}\bigg]\xi_j^2+\\
		&+\sum\limits_{p>1,q>1,p\ne q}\bigg[\la_1KS_n^{11}(\la_1^2\sigma_{n-3}^2(\la|1pq)-S_n(\la)\sigma_{n-3}(\la|1pq))\\
		&-\la_1^2\sigma_{n-3}^2(\la|1pq)+\la_1	S_{n-1}(\la|1)\sigma_{n-3}(\la|1pq)\bigg]\xi_p\xi_q\\
		=&\sum\limits_{j>1}\bigg[(\la_1KS_n^{11}-1)\la_1^2S_{n-2}^2(\la|1j)+\la_1KS_n^{11}S_nS_{n-2}(\la|1j)\\
		&+\la_1KS_n^{11}\epsilon S_n^{jj}S_n^{11}-(1+\epsilon)S_n^{11}S_n^{jj}\bigg]\xi_j^2+\\
		&+\sum\limits_{p>1,q>1,p\ne q}\bigg[(\la_1KS_n^{11}-1)\la_1^2\sigma_{n-3}^2(\la|1pq)-\la_1KS_n^{11}S_n(\la)\sigma_{n-3}(\la|1pq)\\
		&+\la_1	S_{n-1}(\la|1)\sigma_{n-3}(\la|1pq)\bigg]\xi_p\xi_q\\
		\ge&\sum\limits_{j>1}\bigg[(\la_1KS_n^{11}-1)\la_1^2S_{n-2}^2(\la|1j)+\la_1KS_n^{11}S_nS_{n-2}(\la|1j)\\
		&\qquad+(K\theta S_n\epsilon-(1+\epsilon))S_n^{11}S_n^{jj}\bigg]\xi_j^2\\
		&+\sum\limits_{p,q>1;p\ne q}\bigg[(\la_1KS_n^{11}-1)\la_1^2\sigma_{n-3}^2(\la|1pq)\\
		&\qquad-\la_1KS_n^{11}(S_n(\la)-\frac{1}{K})\sigma_{n-3}(\la|1pq)\bigg]\xi_p\xi_q.
	\end{align*}
	Choosing $K$ is sufficiently large, such that
	$$K\theta S_n\epsilon-(1+\epsilon)>0,$$
	hence
	\begin{align*}
		&(\la_1K(S_n^{11})^2-S_n^{11})\\
		&\times\left[\lambda_1\left(K\left(\sum\limits_jS_n^{jj}(\lambda)\xi_j\right)^2-S_n^{pp,qq}(\lambda)\xi_p\xi_q\right)-S_n^{11}(\lambda)\xi_1^2+(1+\epsilon)\sum\limits_{j> 1}S_n^{jj}\xi_j^2\right]\\
		\ge&\sum\limits_{j>1}\bigg[(\la_1KS_n^{11}-1)\la_1^2S_{n-2}^2(\la|1j)+\la_1KS_n^{11}(S_n-\frac{1}{K})S_{n-2}(\la|1j)\bigg]\xi_j^2+\\
		+&\sum\limits_{p,q>1;p\ne q}\bigg[(\la_1KS_n^{11}-1)\la_1^2\sigma_{n-3}^2(\la|1pq)-\la_1KS_n^{11}(S_n(\la)-\frac{1}{K})\sigma_{n-3}(\la|1pq)\bigg]\xi_p\xi_q\\
		=&(\la_1KS_n^{11}-1)\la_1^2\bigg[\sum\limits_{j>1}S_{n-2}^2(\la|1j)\xi_j^2+\sum\limits_{p,q>1;p\ne q}\sigma_{n-3}^2(\la|1pq)\xi_p\xi_q\bigg]\\
		&+\la_1KS_n^{11}(S_n(\la)-\frac{1}{K})\bigg[\sum\limits_{j>1}S_{n-2}(\la|1j)\xi_j^2-\sum\limits_{p,q>1;p\ne q}\sigma_{n-3}(\la|1pq)\xi_p\xi_q\bigg]\\
		\ge&0.
	\end{align*}
	Here, the last step holds by Lemma \eqref{lem3.2} and Remark \eqref{rema3.1}.
	
\end{proof}

	\section{Pogorelov type $C^2$ estimates for $S_k$ equations}
	In this section, we will prove Theorem 1.1(a). We consider the following test function.
	$$\widetilde{P}(x)=\ln{\lambda_{max}}+\beta\ln(-u)+\frac{a}{2}|Du|^2+\frac{A}{2}|x|^2,$$
	where $\la_{max}(x)$ is the biggest eigenvalue of the Hessian matrix, $\beta,a$ and $A$ are constants which will be determined later. Following the analysis of $\ln{\lambda_{max}}$ in \cite{JC}. Suppose $\widetilde{P}$ attain its maximum value in $\Omega$ at $x_0$. Rotating the coordinates, we assume that the matrix $(u_{ij})$ is diagonal at $x_0$. We assume $\la_1(x_0)$ has multiplicity m, then
	$$u_{ij}=u_{ii}\delta_{ij},\lambda_i=u_{ii},\lambda_1=\lambda_2=\cdots=\la_m>\la_{m+1}\ge\cdots\ge\la_n.$$
	We now apply a perturbation argument. Let $B$ be a matrix satisfying the following conditions:
	$$B_{ij}=\delta_{ij}(1-\delta_{1i}),B_{ij,p}=B_{11,ii}=0.$$
	Define the matrix by $\widetilde{u}_{ij}=u_{ij}-B_{ij}$ and denote its eigenvalues by $\widetilde{\lambda}=(\widetilde{\lambda}_1,\widetilde{\lambda}_2,\cdots,\widetilde{\lambda}_n).$
	Hence 
	$$\widetilde{\lambda_1}=\la_1,\widetilde{\lambda_i}=\la_i-1, i=2,\cdots,n.$$
	It follows that $\widetilde{\lambda}_1>\widetilde{\lambda}_2$, which implies that $\widetilde{\lambda}_1$ is smooth at $x_0$. We consider the perturbed quantity $P(x)$ defined by
	$$P(x)=\ln\widetilde{\la}_1+\beta\ln(-u)+\frac{a}{2}|Du|^2+\frac{A}{2}|x|^2.$$
	Differentiating $P(x)$ twice and use Lemma 2.5, at $x_0$, we have
	\begin{equation}\label{4.1}
		\frac{u_{11i}}{\lambda_1}+\frac{\beta u_i}{u}+au_iu_{ii}+Ax_i=0,
	\end{equation}
	\begin{equation}\label{4.2}
		\frac{\beta u_{ii}}{u}-\frac{\beta u_i^2}{u^2}+\frac{u_{11ii}}{\lambda_1}+2\sum\limits_{p>1}\frac{u_{1pi}^2}{\lambda_1(\lambda_1-\widetilde{\lambda_p})}-\frac{u_{11i}^2}{\lambda_1^2}+a\sum\limits_ju_ju_{jii}+au_{ii}^2+A\le 0.
	\end{equation}
	Multiplying both sides of \eqref{4.2} by $S_k^{ii}$ and summing them.
	\begin{align}\label{4.3}
		0\ge&	\frac{\beta S_k^{ii}u_{ii}}{u}-\frac{\beta S_k^{ii}u_i^2}{u^2}+\frac{S_k^{ii}u_{11ii}}{\lambda_1}+2\sum\limits_{p>1}\frac{S_k^{ii}u_{1pi}^2}{\lambda_1(\lambda_1-\widetilde{\lambda_p})}\notag\\
		&-\frac{S_k^{ii}u_{11i}^2}{\lambda_1^2}+a\sum\limits_jS_k^{ii}u_ju_{jii}+a\sum\limits_iS_k^{ii}u_{ii}^2+A\sum\limits_iS_k^{ii}.
	\end{align}
	At $x_0$, differentiating equation (1.1) twice, we have
	\begin{equation}\label{4.4}
		\sum\limits_iS_k^{ii}u_{iij}=f_j=f_{x_j}+f_uu_j+f_{u_j}u_{jj}\le C(1+\lambda_1),
	\end{equation}
	\begin{equation}\label{4.5}
		\sum\limits_{ij,rs}S_k^{ij,rs}u_{ij1}u_{rs1}+\sum\limits_iS_k^{ii}u_{ii11}=f_{11}\ge -C(1+\lambda_1+\lambda_1^2)+\sum\limits_if_{u_i}u_{11i}.
	\end{equation}
	Using Lemma \eqref{lem2.5}, we have
	\begin{align*}
		\frac{S_k^{ii}u_{ii11}}{\lambda_1}&\ge \frac{1}{\lambda_1}\left[-\sum\limits_{ij,rs}S_k^{ij,rs}u_{ij1}u_{rs1}-C(1+\lambda_1+\lambda_1^2)+\sum\limits_if_{u_i}u_{11i}\right]\notag\\
		&=\frac{1}{\lambda_1}\left[-\sum\limits_{p\ne q}S_k^{pp,qq}u_{pp1}u_{qq1}+\sum\limits_{p\ne q}S_k^{pp,qq}u_{pq1}^2-C(1+\lambda_1+\lambda_1^2)+\sum\limits_if_{u_i}u_{11i}\right]\notag\\
		&\ge -\frac{1}{\lambda_1}\sum\limits_{p\ne q}S_k^{pp,qq}u_{pp1}u_{qq1}+\frac{2}{\lambda_1}\sum\limits_{i>m}S_k^{11,ii}u_{11i}^2-C(1+\lambda_1)+\frac{1}{\lambda_1}\sum\limits_if_{u_i}u_{11i}\notag\\
		&= -\frac{1}{\lambda_1}\sum\limits_{p\ne q}S_k^{pp,qq}u_{pp1}u_{qq1}+\sum\limits_{i>m}\frac{2(S_k^{ii}-S_k^{11})u_{11i}^2}{\lambda_1(\lambda_1-\lambda_i)}-C(1+\lambda_1)+\frac{1}{\lambda_1}\sum\limits_if_{u_i}u_{11i}.
	\end{align*}
	By \eqref{4.4} , we obtain
	\begin{align*}
		a\sum\limits_i\sum\limits_jS_k^{ii}u_ju_{iij}\ge -C(A+a+\frac{1}{-u})-\frac{1}{\lambda_1}\sum\limits_jf_{u_j}u_{11j}.
	\end{align*}
	By \eqref{4.1}, we have
	\begin{align*}
		\frac{\beta S_k^{ii}u_i^2}{u^2}&\le \frac{3S_k^{ii}u_{11i}^2}{\beta\lambda_1^2}+\frac{3a^2}{\beta}S_k^{ii}u_i^2u_{ii}^2+\frac{3A^2}{\beta}S_k^{ii}x_i^2\\
		&\le\frac{3S_k^{ii}u_{11i}^2}{\beta\lambda_1^2}+\frac{C_1a^2}{\beta}S_k^{ii}u_{ii}^2+\frac{C_2A^2}{\beta}S_k^{ii}.
	\end{align*}
	Here $C_1=3\sup\limits_{\Omega}|Du|^2$, $C_2=3(diam\Omega)^2$.\\
	By Lemma \eqref{lem2.1} (v), we have
	$$\frac{\beta S_k^{ii}u_{ii}}{u}=\frac{\beta}{u}(kS_k(\lambda)-\sigma_{k-1}(\lambda))\ge\frac{\beta}{u}kS_k(\lambda)\ge \frac{C\beta}{u}.$$
	Plugging the above three inequalities into \eqref{4.3} and choosing 
	$\beta>\max\left\{12,2C_1a,2C_2A\right\}$ , we have
	\begin{align}\label{4.6}
		0\ge&\frac{\beta S_k^{ii}u_{ii}}{u}-\frac{\beta S_k^{ii}u_i^2}{u^2}-\frac{1}{\lambda_1}\sum\limits_{p\ne q}S_k^{pp,qq}u_{pp1}u_{qq1}+\sum\limits_{i>m}\frac{2(S_k^{ii}-S_k^{11})u_{11i}^2}{\lambda_1(\lambda_1-\lambda_i)}\notag\\
		&-C(1+\lambda_1)-C(A+a+\frac{1}{-u})+2\sum\limits_i\sum\limits_{p>1}\frac{S_k^{ii}u_{1pi}^2}{\lambda_1(\lambda_1-\widetilde{\lambda_p})}\notag\\
		&-\frac{S_k^{ii}u_{11i}^2}{\lambda_1^2}+a\sum\limits_iS_k^{ii}u_{ii}^2+A\sum\limits_iS_k^{ii}\notag\\
		\ge&\frac{C\beta}{u}-\frac{\beta S_k^{11}u_1^2}{u^2}-\frac{1}{\lambda_1}\sum\limits_{p\ne q}S_k^{pp,qq}u_{pp1}u_{qq1}+\sum\limits_{i>m}\frac{2(S_k^{ii}-S_k^{11})u_{11i}^2}{\lambda_1(\lambda_1-\lambda_i)}\notag\\
		&-C(A+a+\frac{1}{-u}+\lambda_1)+2\sum\limits_{p>1}\frac{S_k^{11}u_{11p}^2}{\lambda_1(\lambda_1-\widetilde{\lambda_p})}+2\sum\limits_{p>1}\frac{S_k^{pp}u_{1pp}^2}{\lambda_1(\lambda_1-\widetilde{\lambda_p})}-\frac{S_k^{11}u_{111}^2}{\lambda_1^2}\notag\\
		&-\left(1+\frac{3}{\beta}\right)\sum\limits_{i>1}\frac{S_k^{ii}u_{11i}^2}{\lambda_1^2}+\left(a-\frac{C_1a^2}{\beta}\right)\sum\limits_iS_k^{ii}u_{ii}^2+\left(A-\frac{C_2A^2}{\beta}\right)\sum\limits_iS_k^{ii}\notag\\
		\ge&\frac{C\beta}{u}-\frac{\beta S_k^{11}u_1^2}{u^2}-\frac{1}{\lambda_1}\sum\limits_{p\ne q}S_k^{pp,qq}u_{pp1}u_{qq1}+\sum\limits_{i>m}\frac{2(S_k^{ii}-S_k^{11})u_{11i}^2}{\lambda_1(\lambda_1-\lambda_i)}\notag\\
		&-C(A+a+\frac{1}{-u}+\lambda_1)+2\sum\limits_{p>1}\frac{S_k^{11}u_{11p}^2}{\lambda_1(\lambda_1-\widetilde{\lambda_p})}+2\sum\limits_{p>1}\frac{S_k^{pp}u_{1pp}^2}{\lambda_1(\lambda_1-\widetilde{\lambda_p})}-\frac{S_k^{11}u_{111}^2}{\lambda_1^2}\notag\\
		&-\frac{5}{4}\sum\limits_{i>1}\frac{S_k^{ii}u_{11i}^2}{\lambda_1^2}+\frac{a}{2}\sum\limits_iS_k^{ii}u_{ii}^2+\frac{A}{2}\sum\limits_iS_k^{ii}.
	\end{align}
	
	\begin{lemma}\label{lem4.1}
		$$\sum\limits_{i>m}\frac{2(S_k^{ii}-S_k^{11})u_{11i}^2}{\lambda_1(\lambda_1-\lambda_i)}+2\sum\limits_{p>1}\frac{S_k^{11}u_{11p}^2}{\lambda_1(\lambda_1-\widetilde{\lambda_p})}-\frac{5}{4}\sum\limits_{i>1}\frac{S_k^{ii}u_{11i}^2}{\lambda_1^2}\ge 0,$$
		by choosing $\lambda_1$ sufficiently large.
	\end{lemma}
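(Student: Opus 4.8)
The plan is to prove the inequality termwise in the index $i>1$, separating the indices $2\leq i\leq m$ (for which $\lambda_i=\lambda_1$, hence $\widetilde{\lambda}_i=\lambda_1-1$ and $S_k^{ii}=S_k^{11}$ by symmetry of $S_k$) from the indices $i>m$ (for which $\lambda_i<\lambda_1$, so that $\lambda_1-\lambda_i>0$). Before starting I record two facts about the admissible cone: the operator $S_k$ is (degenerate) elliptic on $\widetilde{\Gamma}_k$, so $S_k^{ii}\geq 0$ for all $i$ (Lemma \ref{lem2.3}(ii) for $i\leq k-1$, and, together with the monotonicity below, for all $i$), and $S_k^{11,ii}=S_{k-2}(\lambda|1i)\geq 0$ for every $i\neq 1$ (a property of $\widetilde{\Gamma}_k$, cf. \cite{LR}); the latter is equivalent to the monotonicity $S_k^{ii}\geq S_k^{11}$ whenever $\lambda_i\leq\lambda_1$, through the identity $S_k^{ii}-S_k^{11}=(\lambda_1-\lambda_i)S_{k-2}(\lambda|1i)$ of Lemma \ref{lem2.1}. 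Finally, since $u\in C^1(\overline{\Omega})$ makes $f(x,u,Du)$ bounded we have $S_k(\lambda)=f\leq F$, and combined with $\sigma_k(D^2u)\geq -G$ Lemma \ref{lem2.6} gives $\lambda_n\geq -K$, hence $\lambda_i\geq -K$ for all $i$, with $K=K(n,k,F,G,\alpha)$. Throughout, ``$\lambda_1$ sufficiently large'' means $\lambda_1\geq\lambda_1(n,k,\alpha,K)$.

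For $2\leq i\leq m$ the first sum contributes nothing, $\lambda_1-\widetilde{\lambda}_i=1$, and $S_k^{ii}=S_k^{11}$, so the contribution of this index collapses to
$$\Big(\frac{2}{\lambda_1}-\frac{5}{4\lambda_1^2}\Big)S_k^{11}u_{11i}^2=\frac{S_k^{11}u_{11i}^2}{\lambda_1^2}\Big(2\lambda_1-\frac54\Big)\geq 0$$
once $\lambda_1\geq 1$. For $i>m$ I first use $\dfrac{2(S_k^{ii}-S_k^{11})}{\lambda_1(\lambda_1-\lambda_i)}=\dfrac{2S_k^{11,ii}}{\lambda_1}$, so that the total contribution of the index $i$ equals $\Big(\dfrac{2S_k^{11,ii}}{\lambda_1}+\dfrac{2S_k^{11}}{\lambda_1(\lambda_1-\lambda_i+1)}-\dfrac{5S_k^{ii}}{4\lambda_1^2}\Big)u_{11i}^2$. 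Multiplying the bracket by the positive factor $4\lambda_1^2(\lambda_1-\lambda_i+1)$ and then substituting $S_k^{11}=S_k^{ii}-(\lambda_1-\lambda_i)S_k^{11,ii}$, the bracket becomes (up to this positive factor) equal to
$$8\lambda_1S_k^{11,ii}+S_k^{ii}\big(3\lambda_1+5\lambda_i-5\big),$$
which is nonnegative since $S_k^{11,ii}\geq 0$, $S_k^{ii}\geq 0$, and $3\lambda_1+5\lambda_i-5\geq 3\lambda_1-5K-5>0$ for $\lambda_1$ large. Adding the contributions over $2\leq i\leq m$ and over $i>m$ yields the lemma.

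The only step that is not elementary is the nonnegativity of $S_k^{11,ii}=S_{k-2}(\lambda|1i)$ on $\widetilde{\Gamma}_k$ — equivalently, the monotonicity $S_k^{ii}\geq S_k^{11}$ for $\lambda_i\leq\lambda_1$ — which I would take from the description of $\widetilde{\Gamma}_k$ in \cite{LR}; it is in fact already used implicitly in passing from the twice–differentiated equation \eqref{4.5} to \eqref{4.6} (the term $\tfrac{2}{\lambda_1}\sum_{i>m}S_k^{11,ii}u_{11i}^2$ is kept there with a plus sign). Observe also that the only place the hypothesis $\sigma_k(D^2u)\geq -G$ is needed is to supply $\lambda_i\geq -K$ via Lemma \ref{lem2.6}: without it one only has $\lambda_i\geq -c\lambda_1$, the factor $3\lambda_1+5\lambda_i-5$ can be negative, and the $i>m$ estimate fails — which explains why the case $k=n$ is handled separately, through the concavity inequality of Section 3.2.
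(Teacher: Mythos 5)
Your proof is correct and follows essentially the same route as the paper's: both split the index set at $m$, both rest on the monotonicity $S_k^{pp}\ge S_k^{11}$ (equivalently $S_k^{11,pp}\ge 0$ on $\widetilde{\Gamma}_k$) together with the lower bound $\lambda_p\ge -K$ from Lemma \ref{lem2.6}, and both reduce to the same final positivity condition ($3\lambda_1+5\lambda_p-5\ge 0$ in your normalization, $\tfrac{3}{5}\lambda_1+\lambda_p-1\ge 0$ in the paper's). Only your closing aside is slightly off: for $k=n$ the paper still invokes this lemma, supplying the needed lower bound $\lambda_i>-1/\alpha$ directly from $\widetilde{\Gamma}_n$ rather than from the hypothesis $\sigma_k\ge -G$.
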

	\begin{proof}
		\begin{align*}
			&\sum\limits_{i>m}\frac{2(S_k^{ii}-S_k^{11})u_{11i}^2}{\lambda_1(\lambda_1-\lambda_i)}+2\sum\limits_{p>1}\frac{S_k^{11}u_{11p}^2}{\lambda_1(\lambda_1-\widetilde{\lambda_p})}-\frac{5}{4}\sum\limits_{i>1}\frac{S_k^{ii}u_{11i}^2}{\lambda_1^2}\notag\\
			=&\sum\limits_{p>m}\frac{2(S_k^{pp}-S_k^{11})u_{11p}^2}{\lambda_1(\lambda_1-\lambda_p)}+2\sum\limits_{1<p\le m}\frac{S_k^{11}u_{11p}^2}{\lambda_1(\lambda_1-\widetilde{\lambda_p})}+2\sum\limits_{p>m}\frac{S_k^{11}u_{11p}^2}{\lambda_1(\lambda_1-\widetilde{\lambda_p})}\notag\\
			&-\frac{5}{4}\sum\limits_{1<p\le m}\frac{S_k^{pp}u_{11p}^2}{\lambda_1^2}-\frac{5}{4}\sum\limits_{p>m}\frac{S_k^{pp}u_{11p}^2}{\lambda_1^2}\notag\\
			=&\sum\limits_{p>m}\frac{S_k^{11}u_{11p}^2}{\lambda_1}\left(\frac{2}{\lambda_1-\widetilde{\lambda_{p}}}-\frac{2}{\lambda_1-\lambda_p}\right)+\sum\limits_{p>m}\frac{S_k^{pp}u_{11p}^2}{\lambda_1}\left(\frac{2}{\lambda_1-\lambda_p}-\frac{\frac{5}{4}}{\lambda_1}\right)\notag\\
			&+2\sum\limits_{1<p\le m}\frac{S_k^{11}u_{11p}^2}{\lambda_1(\lambda_1-\widetilde{\lambda_p})}-\frac{5}{4}\sum\limits_{1<p\le m}\frac{S_k^{11}u_{11p}^2}{\lambda_1^2}\notag\\
			=&\sum\limits_{p>m}\frac{S_k^{11}u_{11p}^2}{\lambda_1}\frac{-2}{(\lambda_1-\lambda_p)(\lambda_1-\lambda_p+1)}+\sum\limits_{p>m}\frac{S_k^{pp}u_{11p}^2}{\lambda_1}\frac{\frac{3}{4}\lambda_1+\frac{5}{4}\lambda_p}{\lambda_1(\lambda_1-\lambda_p)}\notag\\
			&+\sum\limits_{1<p\le m}\frac{S_k^{11}u_{11p}^2}{\lambda_1}\left(\frac{2}{\lambda_1-\widetilde{\lambda_p}}-\frac{\frac{5}{4}}{\lambda_1}\right)\notag\\
			\ge&\sum\limits_{p>m}\frac{S_k^{11}u_{11p}^2}{\lambda_1}\frac{-2}{(\lambda_1-\lambda_p)(\lambda_1-\lambda_p+1)}+\sum\limits_{p>m}\frac{S_k^{11}u_{11p}^2}{\lambda_1}\frac{\frac{3}{4}\lambda_1+\frac{5}{4}\lambda_p}{\lambda_1(\lambda_1-\lambda_p)}\notag\\
			&+\sum\limits_{1<p\le m}\frac{S_k^{11}u_{11p}^2}{\lambda_1}\frac{\frac{3}{4}\lambda_1+\frac{5}{4}\widetilde{\lambda_p}}{\lambda_1(\lambda_1-\widetilde{\lambda_p})}\notag\\
			\ge&\sum\limits_{p>m}\frac{S_k^{11}u_{11p}^2}{\lambda_1}\left(\frac{\frac{3}{4}\lambda_1+\frac{5}{4}\lambda_p}{\lambda_1(\lambda_1-\lambda_p)}-\frac{2}{(\lambda_1-\lambda_p)(\lambda_1-\lambda_p+1)}\right)\notag\\
			=&\sum\limits_{p>m}\frac{S_k^{11}u_{11p}^2}{\lambda_1}\frac{\frac{3}{5}\lambda_1+\lambda_p-1}{\lambda_1(\lambda_1-\lambda_p+1)}\notag\\
			\ge&0.
		\end{align*}
		
	\end{proof}
	Plugging Lemma \ref{lem4.1} into \eqref{4.6}, we get
	\begin{align}\label{4.7}
		0\ge&\frac{C\beta}{u}-\frac{\beta S_k^{11}u_1^2}{u^2}-\frac{1}{\lambda_1}\sum\limits_{p\ne q}S_k^{pp,qq}u_{pp1}u_{qq1}+2\sum\limits_{p>1}\frac{S_k^{pp}u_{1pp}^2}{\lambda_1(\lambda_1-\widetilde{\lambda_p})}\notag\\
		&-\frac{S_k^{11}u_{111}^2}{\lambda_1^2}+\frac{a}{2}\sum\limits_iS_k^{ii}u_{ii}^2+\frac{A}{2}\sum\limits_iS_k^{ii}-C(A+a+\frac{1}{-u}+\lambda_1).
	\end{align}
	\begin{lemma}\label{lem4.2}
		$\forall \epsilon_0\in (0,\frac{1}{2})$, if we choose $\delta\in(0,\frac{\epsilon_0}{4}),\epsilon\in(0,\frac{\epsilon_0\vartheta}{4}),\delta_0=\frac{2k}{n+k}, m\le l<k$, there exist constants $\delta^{'}$ depending only on $\epsilon,\delta,\delta_0,n,k,l$, such that if $\lambda_l \ge \delta\lambda_1,\lambda_{l+1} \le \delta^{'}\lambda_1$, then
		$$(-u)^{\beta}\lambda_1\le C,$$
		where $C$ depends on $\epsilon,\delta,\delta_0,n,k,l,|u|_{C^1},\inf f,|f|_{C^2}.$
	\end{lemma}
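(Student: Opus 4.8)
\emph{Proof plan.} The plan is to run a maximum-principle argument at the point $x_0$ where $\widetilde P$ (equivalently the perturbed $P$) attains its maximum, arguing by contradiction: suppose $(-u)^\beta\lambda_1$ is very large at $x_0$. Since $-u\le\|u\|_{C^0}$ is bounded, this forces $\lambda_1=\lambda_1(x_0)$ to be as large as we wish, so every ``$\lambda_1$ sufficiently large'' clause becomes available: Lemma \ref{lem2.6} gives $\lambda_n\ge-K$ (hence $\sigma_{k-1}(\lambda)=f-\alpha\sigma_k\le\sup f+\alpha G=:C$ and $|\lambda_i|$ bounded for $i>l$), Lemma \ref{lem4.1} applies, and Lemma \ref{3.4} applies. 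First dispose of $l=k-1$: there $\lambda_l\ge\delta\lambda_1$ gives $\lambda_1\cdots\lambda_{k-1}\ge\delta^{k-1}\lambda_1^{k-1}$, while Lemma \ref{lem2.2}(iii) gives $\lambda_1\cdots\lambda_{k-1}\le\sigma_{k-1}(\lambda)\le C$, so $\lambda_1\le C$; done. Hence assume $m\le l\le k-2$, the range where Lemma \ref{3.4} applies (take $k<n$, so $\delta_0=\tfrac{2k}{n+k}\in(0,1)$).

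Now feed Lemma \ref{3.4} into \eqref{4.7} with the choice $\xi_i=u_{ii1}\,(=u_{1ii})$. Then $\sum_{p\ne q}S_k^{pp,qq}\xi_p\xi_q$ is exactly the curvature term $\sum_{p\ne q}S_k^{pp,qq}u_{pp1}u_{qq1}$ in \eqref{4.7}, $\xi_1=u_{111}$, and $\sum_iS_k^{ii}\xi_i=\bigl(S_k(D^2u)\bigr)_{x_1}=f_1$, which by \eqref{4.4} satisfies $|f_1|\le C(1+\lambda_1)$. Multiplying Lemma \ref{3.4} by $S_k/\lambda_1>0$ and using $S_k\ge m$, the contribution $(1-\vartheta+\vartheta/\delta)(\sum S_k^{ii}\xi_i)^2/S_k^2$ is absorbed into an additive error $-C_\delta(1+\lambda_1)$. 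Thus \eqref{4.7} becomes, up to $-C_\delta(1+\lambda_1)$, an inequality whose positive part contains $(1+\vartheta-\delta\vartheta-\epsilon)\tfrac{S_ku_{111}^2}{\lambda_1^3}$, the term $2\sum_{p>1}\tfrac{S_k^{pp}u_{1pp}^2}{\lambda_1(\lambda_1-\widetilde{\lambda}_p)}$, $\tfrac a2\sum_iS_k^{ii}u_{ii}^2$ and $\tfrac A2\sum_iS_k^{ii}$, and whose negative part is $-\tfrac{\delta_0}{\lambda_1^2}\sum_{i>l}S_k^{ii}u_{ii1}^2-\tfrac{S_k^{11}u_{111}^2}{\lambda_1^2}-\tfrac{\beta S_k^{11}u_1^2}{u^2}+\tfrac{C\beta}{u}-C(A+a+\tfrac1{-u}+\lambda_1)$.

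Next comes the absorption of the third-order and gradient terms. The $p>l$ part of $2\sum_{p>1}\tfrac{S_k^{pp}u_{1pp}^2}{\lambda_1(\lambda_1-\widetilde{\lambda}_p)}$ dominates $\tfrac{\delta_0}{\lambda_1^2}\sum_{i>l}S_k^{ii}u_{ii1}^2$ (these involve the same derivatives $u_{pp1}=u_{1pp}$): for $p>l$ one has $\lambda_p\le\delta'\lambda_1$ and, by Lemma \ref{lem2.6}, $\lambda_p\ge-K$, so $\lambda_1-\widetilde{\lambda}_p=\lambda_1-\lambda_p+1\le(1+o(1))\lambda_1$ and hence $\tfrac{2}{\lambda_1(\lambda_1-\widetilde{\lambda}_p)}\ge\tfrac{\delta_0}{\lambda_1^2}$ for $\lambda_1$ large, since $\delta_0<1<2$; this is where the hypothesis $\lambda_{l+1}\le\delta'\lambda_1$ is used (and $\delta'$ is taken to be the one produced by Lemma \ref{3.4}). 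The quantities $u_{111}$ and $u_1$ are controlled by the critical equation \eqref{4.1} with $i=1$, namely $\tfrac{u_{111}}{\lambda_1}=-\tfrac{\beta u_1}{u}-au_1\lambda_1-Ax_1$, so $\tfrac{u_{111}^2}{\lambda_1^2}\le 3\bigl(\tfrac{\beta^2u_1^2}{u^2}+a^2u_1^2\lambda_1^2+A^2x_1^2\bigr)$; dropping the nonnegative $u_{111}$-term from Lemma \ref{3.4}, the two terms $-\tfrac{S_k^{11}u_{111}^2}{\lambda_1^2}$ and $-\tfrac{\beta S_k^{11}u_1^2}{u^2}$ are bounded below by $-C\beta^2\tfrac{S_k^{11}u_1^2}{u^2}-Ca^2\lambda_1^2S_k^{11}-CA^2S_k^{11}$.

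Finally one fixes the constants. Using $\sum_iS_k^{ii}u_{ii}^2\ge S_k^{11}\lambda_1^2\ge\theta S_k\lambda_1\ge\theta m\lambda_1$ (Lemma \ref{lem2.3}(ii)), together with the stronger lower bounds on $\sum_iS_k^{ii}$ and $\sum_iS_k^{ii}u_{ii}^2$ that $\lambda_l\ge\delta\lambda_1$ yields inside $\widetilde\Gamma_k$ (via Lemmas \ref{lem2.2}, \ref{lem2.3}), one chooses $\beta$ large and $a,A$ small, depending on $n,k,\alpha,G,\|u\|_{C^1},\inf f,|f|_{C^2},\delta,\epsilon,\delta_0$. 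Then $Ca^2\lambda_1^2S_k^{11}$, $CA^2S_k^{11}$ and the $O(\lambda_1)$ errors $C\lambda_1+C_\delta(1+\lambda_1)$ are swallowed by $\tfrac a2\sum_iS_k^{ii}u_{ii}^2$ and $\tfrac A2\sum_iS_k^{ii}$, while $C\beta^2\tfrac{S_k^{11}u_1^2}{u^2}$, $\tfrac{C\beta}{-u}$ and $\tfrac{C}{-u}$ are swallowed once $\lambda_1(-u)\ge C'$ (with $C'$ depending on $\beta,a$) — which holds precisely because $(-u)^\beta\lambda_1$ is large, $-u$ is bounded and $\beta\ge1$. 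The right-hand side is then strictly positive, a contradiction, so $(-u)^\beta\lambda_1\le C$. I expect the genuine difficulty to lie in this last step: reconciling the smallness of $a,A$ needed to kill $a^2\lambda_1^2S_k^{11}$ and $A^2S_k^{11}$ with the requirement that $\tfrac a2\sum_iS_k^{ii}u_{ii}^2$ and $\tfrac A2\sum_iS_k^{ii}$ out-weigh the $O(\lambda_1)$ errors, whose coefficients involve $|f|_{C^2}$, $\|u\|_{C^1}$ and $\vartheta/\delta$; this is what forces the careful use of the geometry of $\widetilde\Gamma_k$ under $\lambda_l\ge\delta\lambda_1$ and a disciplined ordering of the parameter choices ($\delta_0=\tfrac{2k}{n+k}$, then $\epsilon$ and $\delta$, then $\delta'$, then $\beta$, then $a$ and $A$).
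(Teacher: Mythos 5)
Your skeleton agrees with the paper's up to the decisive step: you test Lemma \ref{3.4} with $\xi_i=u_{ii1}$, control $\bigl(\sum_iS_k^{ii}u_{ii1}\bigr)^2$ by differentiating the equation, and absorb the $\delta_0$-term into $2\sum_{p>1}\frac{S_k^{pp}u_{1pp}^2}{\lambda_1(\lambda_1-\widetilde\lambda_p)}$ (the paper does this with Lemma \ref{lem2.2}(iv) and the exact choice $\delta_0=\frac{2k}{n+k}$ rather than with $\lambda_p\ge-K$ and $\lambda_1$ large, but both work). The genuine gap is exactly where you locate it, and it is not a technicality you can defer: by \emph{dropping} the nonnegative output $(1+\vartheta-\delta\vartheta-\epsilon)\frac{S_ku_{111}^2}{\lambda_1^3}$ of the concavity lemma and then bounding $-\frac{S_k^{11}u_{111}^2}{\lambda_1^2}$ through the critical equation \eqref{4.1}, you produce a term $-Ca^2|Du|^2\lambda_1^2S_k^{11}$ that can only be absorbed by $\frac a2S_k^{11}u_{11}^2$ if $a$ is \emph{small}, while the $O(\lambda_1)$ errors (with coefficients involving $|f|_{C^2}$, $\vartheta/\delta$, etc.) can only be absorbed by $\frac a4S_k^{11}u_{11}^2\ge\frac{a\theta f}{4}\lambda_1$ if $a$ is \emph{large}. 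There is no choice of $a$ reconciling these, and no other positive term of order $\lambda_1$ is available in general (your fallback $\frac A2\sum_iS_k^{ii}\gtrsim A\lambda_1\cdots\lambda_{k-1}$ need not dominate $\lambda_1$, since $\lambda_{l+1},\dots,\lambda_{k-1}$ may be arbitrarily small). So the argument as planned does not close.

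The paper's resolution is structurally different at this point: it keeps the term $\frac{S_ku_{111}^2}{\lambda_1^3}$ and splits it via Lemma \ref{lem2.1}(iii) as $\frac{S_k^{11}u_{111}^2}{\lambda_1^2}+\frac{S_k(\lambda|1)u_{111}^2}{\lambda_1^3}$, so that $-\frac{S_k^{11}u_{111}^2}{\lambda_1^2}$ cancels \emph{exactly} (see \eqref{4.12}--\eqref{4.13}); only the remainder $\frac{S_k(\lambda|1)u_{111}^2}{\lambda_1^3}$ is estimated through \eqref{4.1}, and — using $\lambda_n\ge-K$ from Lemma \ref{lem2.6} — it is bounded below by $-Ca^2K\,\lambda_1\cdots\lambda_{k-1}(1+\lambda_k)$, i.e.\ by a multiple of $\lambda_1\cdots\lambda_{k-1}$, not of $\lambda_1^2S_k^{11}$. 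This is then absorbed by $\frac A2\sum_iS_k^{ii}\ge\frac{A\alpha(n-k+1)}{4}\lambda_1\cdots\lambda_{k-1}$ with $A=a^3$ and $a$ chosen \emph{large} (after a case analysis on whether $\lambda_k$ exceeds a threshold $M$, the large-$\lambda_k$ case being controlled directly by the equation), and the same large $a$ kills the $O(\lambda_1)$ errors. Your parameter ordering ("$\beta$ large, $a,A$ small") is therefore the opposite of what the proof requires. A secondary slip: your disposal of $l=k-1$ uses $\lambda_1\cdots\lambda_{k-1}\le\sigma_{k-1}$, which requires $\lambda\in\Gamma_k$ and fails in $\Gamma_{k-1}$ (e.g.\ $(10,10,-1)\in\Gamma_2$ has $\sigma_2=80<100$); the correct route, used in the paper's concluding paragraph, is the lower bound $\sigma_{k-1}\ge\lambda_1\cdots\lambda_{k-2}(\lambda_{k-1}-CK)$ combined with $\sigma_{k-1}\le f+\alpha G$.
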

	\begin{proof}
		Using the concavity inequality in section 3.2, we have
		\begin{align*}
			&-\sum\limits_{p\ne q}\frac{S_k^{pp,qq}u_{pp1} u_{qq1}}{S_k}+(1-\vartheta+\frac{\vartheta}{\delta})\frac{\left(\sum\limits_{i}S_k^{ii}u_{ii1} \right)^2}{S_k^2}\notag\\
			\ge& (1+\vartheta-\delta\vartheta-\epsilon)\frac{u_{111}^2}{\lambda_1^2}-\delta_0 \sum\limits_{i>l}\frac{S_k^{ii}u_{ii1}^2}{\lambda_1 S_k},
		\end{align*}
		it follows that
		\begin{align}\label{4.8}
			&-\frac{1}{\lambda_1}\sum\limits_{p\ne q}S_k^{pp,qq}u_{pp1}u_{qq1}+2\sum\limits_{p>1}\frac{S_k^{pp}u_{1pp}^2}{\lambda_1(\lambda_1-\widetilde{\lambda_p})}\notag\\
			\ge&-(1-\vartheta+\frac{\vartheta}{\delta})\frac{\left(\sum\limits_{i}S_k^{ii}u_{ii1} \right)^2}{\lambda_1S_k}+(1+\vartheta-\delta\vartheta-\epsilon)\frac{S_ku_{111}^2}{\lambda_1^3}\notag\\
			&-\delta_0 \sum\limits_{i>l}\frac{S_k^{ii}u_{ii1}^2}{\lambda_1^2}+2\sum\limits_{p>1}\frac{S_k^{pp}u_{1pp}^2}{\lambda_1(\lambda_1-\widetilde{\lambda_p})}.
		\end{align}
		By \eqref{4.4}, we have
		\begin{align}\label{4.9}
			\left(\sum\limits_iS_k^{ii}u_{ii1}\right)^2=&\left([S_k(D^2u)]_{x_1}\right)^2=(f_{x_1}+f_uu_1+f_{u_1}u_{11})^2\notag\\
			\le&3(f_{x_1}^2+f_u^2u_1^2+f_{u_1}^2u_{11}^2)\le C+Cu_{11}^2\le C\lambda_1^2.
		\end{align}
		Note that
		\begin{equation}\label{4.10}
			\vartheta-\delta\vartheta-\epsilon\ge \vartheta-\frac{\epsilon_0\vartheta}{4}-\frac{\epsilon_0\vartheta}{4}=\vartheta(1-\frac{\epsilon_0}{2})\ge 0,
		\end{equation}
		\begin{align}\label{4.11}
			&-\delta_0 \sum\limits_{i>l}\frac{S_k^{ii}u_{ii1}^2}{\lambda_1^2}+2\sum\limits_{p>1}\frac{S_k^{pp}u_{1pp}^2}{\lambda_1(\lambda_1-\widetilde{\lambda_p})}\notag\\
			\ge&-\delta_0 \sum\limits_{p>l}\frac{S_k^{pp}u_{1pp}^2}{\lambda_1^2}+2\sum\limits_{p>l}\frac{S_k^{pp}u_{1pp}^2}{\lambda_1(\lambda_1-\widetilde{\lambda_p})}\notag\\
			=&\sum\limits_{p>l}\frac{S_k^{pp}u_{1pp}^2}{\lambda_1}\left(\frac{2}{\lambda_1-\widetilde{\lambda_p}}-\frac{\delta_0}{\lambda_1}\right)\notag\\
			=&\sum\limits_{p>l}\frac{S_k^{pp}u_{1pp}^2}{\lambda_1}\frac{(2-\delta_0)\lambda_1+\delta_0\lambda_p-\delta_0}{\lambda_1(\lambda_1-\lambda_p+1)}\notag\\
			\ge&\sum\limits_{p>l}\frac{S_k^{pp}u_{1pp}^2}{\lambda_1}\frac{(2-2\delta_0)\lambda_1+\delta_0\lambda_p}{\lambda_1(\lambda_1-\lambda_p+1)}\notag\\
			\ge&\sum\limits_{p>l}\frac{S_k^{pp}u_{1pp}^2}{\lambda_1}\frac{(2-2\delta_0)\lambda_1+\delta_0\left(-\frac{n-k}{k}\lambda_1\right)}{\lambda_1(\lambda_1-\lambda_p+1)}\notag\\
			=&\sum\limits_{p>l}\frac{S_k^{pp}u_{1pp}^2}{\lambda_1}\frac{[2k-(n+k)\delta_0]\frac{\lambda_1}{k}}{\lambda_1(\lambda_1-\lambda_p+1)}\notag\\
			=&0.
		\end{align}
		Plugging \eqref{4.9} \eqref{4.10} \eqref{4.11} into \eqref{4.8}, we have
		\begin{align}\label{4.12}
			&-\frac{1}{\lambda_1}\sum\limits_{p\ne q}S_k^{pp,qq}u_{pp1}u_{qq1}+2\sum\limits_{p>1}\frac{S_k^{pp}u_{1pp}^2}{\lambda_1(\lambda_1-\widetilde{\lambda_p})}\notag\\
			\ge&-(1-\vartheta+\frac{\vartheta}{\delta})\frac{C\lambda_1}{f}+\frac{S_ku_{111}^2}{\lambda_1^3}\notag\\
			\ge&-C\lambda_1+\frac{S_k^{11}u_{111}^2}{\lambda_1^2}+\frac{S_k(\lambda|1)u_{111}^2}{\lambda_1^3}.
		\end{align}
		Combining \eqref{4.12} and \eqref{4.7}, we have
		\begin{align}\label{4.13}
			0\ge&\frac{C\beta}{u}-\frac{\beta S_k^{11}u_1^2}{u^2}+\frac{S_k(\lambda|1)u_{111}^2}{\lambda_1^3}\notag\\
			&+\frac{a}{2}\sum\limits_iS_k^{ii}u_{ii}^2+\frac{A}{2}\sum\limits_iS_k^{ii}-C(A+a+\frac{1}{-u}+\lambda_1).
		\end{align}
		\textbf{Case 1}$\lambda_{k}> 0.$\\
		By Lemma \ref{lem2.2} , we have
		\begin{align}
			\sigma_k(\lambda|1)
			&\ge -C\lambda_2\cdots\lambda_k|\lambda_n|-C\lambda_2\cdots\lambda_{k-1}
			|\lambda_{n-1}||\lambda_n|-\cdots\notag\\
			&\ge -C_3\lambda_2\cdots\lambda_k\cdot K\notag,
		\end{align}
		\begin{align}
			\sigma_{k-1}(\lambda|1)
			&\ge \lambda_{2}\cdots\lambda_{k} -C\lambda_2\cdots\lambda_{k-1}|\lambda_n|-C\lambda_2\cdots\lambda_{k-2}
			|\lambda_{n-1}||\lambda_n|-\cdots\notag\\
			&\ge -C_4\lambda_2\cdots\lambda_{k-1}\cdot K\notag.
		\end{align}
		Using \eqref{4.1}, choosing $\lambda_1$ and $(-u)\lambda_1$ sufficiently large, we have
		\begin{align*}
			\frac{u_{111}^2}{\lambda_1^2}&\le \frac{3\beta^2 u_1^2}{u^2}+3a^2u_1^2u_{11}^2+3A^2x_1^2\\
			&\le \frac{C\beta^2}{u^2}+Ca^2u_{11}^2+CA^2\le 3C_5a^2\lambda_1^2.
		\end{align*}
		Hence
		\begin{align}\label{4.14}
			\frac{S_k(\lambda|1)u_{111}^2}{\lambda_1^3}=&(\sigma_{k-1}(\lambda|1)+\alpha\sigma_{k}(\lambda|1))\frac{1}{\lambda_1}\frac{u_{111}^2}{\lambda_1^2}\notag\\
			\ge&(-\alpha C_3\lambda_2\cdots\lambda_k\cdot K-C_4\lambda_2\cdots\lambda_{k-1}\cdot K)\frac{1}{\lambda_1}\cdot 3C_5a^2\lambda_1^2\notag\\
			\ge&-3C_6Ka^2\lambda_1\cdots\lambda_k-3C_6Ka^2\lambda_1\cdots\lambda_{k-1}.
		\end{align}
		Here $C_6=\max\left\{\alpha C_3C_5, C_4C_5\right\}$.\\
		By direct calculation, we have
		\begin{align}\label{4.15}
			\sum\limits_iS_k^{ii}&=\sum\limits_i[\sigma_{k-2}(\lambda|i)+\alpha\sigma_{k-1}(\lambda|i)]\notag\\
			&=(n-k+2)\sigma_{k-2}(\lambda)+\alpha(n-k+1)\sigma_{k-1}(\lambda)\notag\\
			&\ge(n-k+1)S_{k-1}(\lambda)\notag\\
			&\ge (n-k+1)\frac{\lambda_1\cdots \lambda_{k-2}+\alpha\lambda_1\cdots\lambda_{k-1}}{2}\notag\\
			&\ge \frac{\alpha(n-k+1)}{2}\lambda_1\cdots \lambda_{k-1}.
		\end{align}
		Plugging \eqref{4.14} \eqref{4.15}  into \eqref{4.13}, we obtain
		\begin{align}\label{4.16}
			0\ge&\frac{C\beta}{u}-\frac{\beta S_k^{11}u_1^2}{u^2}-3C_6Ka^2\lambda_1\cdots\lambda_k-3C_6Ka^2\lambda_1\cdots\lambda_{k-1}\notag\\
			&+\frac{a}{2}S_k^{11}u_{11}^2+\frac{A\alpha(n-k+1)}{4}\lambda_1\cdots \lambda_{k-1}-C(A+a+\frac{1}{-u}+\lambda_1)\notag\\
			\ge&\frac{C\beta}{u}-\frac{\beta S_k^{11}u_1^2}{u^2}+\left(\frac{A\alpha(n-k+1)}{4}-3C_6Ka^2-3C_6Ka^2\lambda_k\right)\lambda_1\cdots\lambda_{k-1}\notag\\
			&+\frac{a}{2}S_k^{11}u_{11}^2-C(A+a+2\lambda_1)\notag\\
			\ge&\frac{C\beta}{u}-\frac{\beta S_k^{11}u_1^2}{u^2}+\left(\frac{A\alpha(n-k+1)}{4}-3C_6Ka^2-3C_6Ka^2\lambda_k\right)\lambda_1\cdots\lambda_{k-1}\notag\\
			&+\frac{a}{4}S_k^{11}u_{11}^2+\left(\frac{a\theta f}{4}-2C\right)\lambda_1-C(A+a),
		\end{align}
		we used Lemma \eqref{lem2.3} for the last inequality.\\
		Now we let $A=a^3$ and choose $a$ sufficiently large such that
		$$a>\max\left\{\frac{24C_6C_n^kK^2+12C_6K}{\alpha(n-k+1)},\frac{4(1+2C)}{\theta f}\right\}.$$
		We note $$M=\frac{\frac{a\alpha(n-k+1)}{4}-3C_6K}{3C_6K}.$$
		If $\lambda_{k}\ge M$, then
		\begin{align*}
			S_k(\lambda)&\ge\alpha\sigma_k(\lambda)\notag\\
			&\ge \alpha\lambda_1\cdots\lambda_{k}-C_n^k\alpha\lambda_1\cdots \lambda_{k-1}\cdot K\notag\\
			&=\alpha\lambda_1\cdots \lambda_{k-1}(\lambda_{k}-C_n^kK)\notag\\
			&\ge \alpha\lambda_1\lambda_{k}^{k-2}C_n^kK\notag\\
			&\ge \alpha M^{k-2}C_n^kK\lambda_1.
		\end{align*}
		If $0<\lambda_{k}\le M$, then $\frac{A\alpha(n-k+1)}{4}-3C_6Ka^2-3C_6Ka^2\lambda_k\ge 0$, we have
		\begin{align*}
			0&\ge \frac{C\beta}{u}-\frac{\beta S_k^{11}u_1^2}{u^2}+\frac{a}{4}S_k^{11}u_{11}^2+\left(\frac{a\theta f}{4}-2C\right)\lambda_1-C(A+a)\\
			&\ge \frac{C\beta}{u}-\frac{\beta S_k^{11}u_1^2}{u^2}+\frac{a}{4}S_k^{11}u_{11}^2+\lambda_1-C(A+a),
		\end{align*}
		which implies that desired estimates.\\
		\textbf{Case B}$\lambda_{k}\le 0$.\\
		Imitating the previous discussion, we have
		\begin{align}
			\sigma_k(\lambda|1)
			&\ge -C\lambda_2\cdots\lambda_{k-1}|\lambda_{n-1}||\lambda_n|-C\lambda_2\cdots\lambda_{k-2}
			|\lambda_{n-2}||\lambda_{n-1}||\lambda_n|-\cdots\notag\\
			&\ge -C\lambda_2\cdots\lambda_{k-1}\cdot K^2\notag,
		\end{align}
		\begin{align}
			\sigma_{k-1}(\lambda|1)
			&\ge \lambda_{2}\cdots\lambda_{k-1}|\lambda_{n}| -C\lambda_2\cdots\lambda_{k-2}||\lambda_{n-1}|\lambda_n|-\cdots\notag\\
			&\ge -C\lambda_2\cdots\lambda_{k-1}\cdot K,\notag
		\end{align}
		\begin{align}\label{4.17}
			\frac{S_k(\lambda|1)u_{111}^2}{\lambda_1^3}=&(\sigma_{k-1}(\lambda|1)+\alpha\sigma_{k}(\lambda|1))\frac{1}{\lambda_1}\frac{u_{111}^2}{\lambda_1^2}\notag\\
			\ge&(-C\lambda_2\cdots\lambda_{k-1}\cdot K-C\lambda_2\cdots\lambda_{k-1}\cdot K^2)\frac{1}{\lambda_1}\cdot 3Ca^2\lambda_1^2\notag\\
			\ge&-3CK^2a^2\lambda_1\cdots\lambda_{k-1}.
		\end{align}
		Plugging \eqref{4.15} \eqref{4.17} into \eqref{4.13}, and noticed that $A=a^3$, choose $a$ sufficiently large, we have
		\begin{align*}
			0\ge&\frac{C\beta}{u}-\frac{\beta S_k^{11}u_1^2}{u^2}-3CK^2a^2\lambda_1\cdots\lambda_{k-1}+\frac{a}{2}S_k^{11}u_{11}^2\notag\\
			&+\frac{A\alpha(n-k+1)}{4}\lambda_1\cdots \lambda_{k-1}-C(A+a+\frac{1}{-u}+\lambda_1)\notag\\
			\ge&\frac{C\beta}{u}-\frac{\beta S_k^{11}u_1^2}{u^2}+\left(\frac{A\alpha(n-k+1)}{4}-3CK^2a^2\right)\lambda_1\cdots\lambda_{k-1}\notag\\
			&+\frac{a}{2}S_k^{11}u_{11}^2-C(A+a+2\lambda_1)\\
			\ge& \frac{C\beta}{u}-\frac{\beta S_k^{11}u_1^2}{u^2}+\frac{a}{2}S_k^{11}u_{11}^2-C(A+a+2\lambda_1)\\
			\ge&\frac{C\beta}{u}-\frac{\beta S_k^{11}u_1^2}{u^2}+\frac{a}{4}S_k^{11}u_{11}^2+\left(\frac{a\theta_0}{4}-C\right)\lambda_1.
		\end{align*}
		Hence, we obtain Lemma \ref{lem4.2}.
	\end{proof}
	We now complete the proof of Theorem 1.1 (a). Set $\delta_1=\frac{1}{3}$. By Lemma \ref{lem4.2}, there exists constant $\delta_2$ such that if $\lambda_2\le\delta_2\lambda_1$, then $(-u)^{\beta}\lambda_1\le C$. If $\lambda_2>\delta_2\lambda_1$, using Lemma \ref{lem4.2} again, there exists $\delta_3$ such that if $\lambda_3\le\delta_3\lambda_1$, then $(-u)^{\beta}\lambda_1\le C$. Keep repeating this process and we will obtain
	$(-u)^{\beta}\lambda_1\le C$ or $\lambda_i>\delta_i\lambda_1$, $i=1,2,\cdots,k-1$. For the least case, by $\sigma_{k}(\lambda)\ge -G$ , we have
	\begin{align*}
		S_k&=\sigma_{k-1}+\alpha\sigma_k\\
		&\ge \lambda_1\cdots\lambda_{k-1}-CK\lambda_1\cdots\lambda_{k-2}-G\alpha\\
		&= \lambda_1\cdots\lambda_{k-2}(\lambda_{k-1}-CK)-G\alpha\\
		&\ge \lambda_1\cdots\lambda_{k-2}(\delta_{k-1}\lambda_1-CK)-G\alpha\\
		&\ge C\delta_2\delta_3\cdots\delta_{k-1}\lambda_1^{k-1}-G\alpha,
	\end{align*}
	we complete the proof of Theorem 1.1.
	
	\section{Pogorelov type $C^2$ estimates for $S_n$ equations}
	In this section, we will prove Theorem 1.2. we consider the following test function.
	$$P(x)=\ln\widetilde{\lambda}_1+\beta\ln(-u)+\frac{a}{2}|Du|^2,$$
	where $\beta$ and $a$ are constants which will be determined later. Imitating the previous calculation, we have

	\begin{align}\label{5.1}
		0\ge&\frac{C\beta}{u}-\frac{\beta S_n^{11}u_1^2}{u^2}-\frac{1}{\lambda_1}\sum\limits_{p\ne q}S_n^{pp,qq}u_{pp1}u_{qq1}+\sum\limits_{i>m}\frac{2(S_n^{ii}-S_n^{11})u_{11i}^2}{\lambda_1(\lambda_1-\lambda_i)}\notag\\
		&-C(a+\frac{1}{-u}+\lambda_1)+2\sum\limits_{p>1}\frac{S_n^{11}u_{11p}^2}{\lambda_1(\lambda_1-\widetilde{\lambda_p})}+2\sum\limits_{p>1}\frac{S_n^{pp}u_{1pp}^2}{\lambda_1(\lambda_1-\widetilde{\lambda_p})}-\frac{S_n^{11}u_{111}^2}{\lambda_1^2}\notag\\
		&-\frac{5}{4}\sum\limits_{i>1}\frac{S_n^{ii}u_{11i}^2}{\lambda_1^2}+\frac{a}{2}\sum\limits_iS_n^{ii}u_{ii}^2.
	\end{align}
Since $\lambda=(\lambda_1,\cdots,\lambda_n)\in\widetilde{\Gamma}_n$, we have
$$S_n^{11,22,\cdots,n-1,n-1}(\lambda)=S_1(\lambda|12\cdots n-1)=1+\alpha\lambda_n>0.$$
Thus $\lambda_i>-\frac{1}{\alpha},i=1,2,\cdots,n$. Combining with Lemma \ref{lem4.1}, we still have,
	\begin{align}\label{5.2}
		&\sum\limits_{i>m}\frac{2(S_n^{ii}-S_n^{11})u_{11i}^2}{\lambda_1(\lambda_1-\lambda_i)}+2\sum\limits_{p>1}\frac{S_n^{11}u_{11p}^2}{\lambda_1(\lambda_1-\widetilde{\lambda_p})}-\frac{5}{4}\sum\limits_{i>1}\frac{S_n^{ii}u_{11i}^2}{\lambda_1^2}\notag\\
		&\ge \sum\limits_{p>m}\frac{S_n^{11}u_{11p}^2}{\lambda_1}\frac{\frac{3}{5}\lambda_1+\lambda_p-1}{\lambda_1(\lambda_1-\lambda_p+1)}\ge0.
	\end{align}
	Plugging \eqref{5.2} into \eqref{5.1}, we have
	\begin{align}\label{5.3}
		0\ge&\frac{C\beta}{u}-\frac{\beta S_n^{11}u_1^2}{u^2}-\frac{1}{\lambda_1}\sum\limits_{p\ne q}S_n^{pp,qq}u_{pp1}u_{qq1}+2\sum\limits_{p>1}\frac{S_n^{pp}u_{1pp}^2}{\lambda_1(\lambda_1-\widetilde{\lambda_p})}\notag\\
		&-\frac{S_n^{11}u_{111}^2}{\lambda_1^2}+\frac{a}{2}\sum\limits_iS_n^{ii}u_{ii}^2-C(a+\frac{\beta}{-u}+\lambda_1).
	\end{align}
	Using Lemma \ref{lem3.3}, we have $\exists K>0$, such that
	\begin{equation}\label{5.4}
		-\frac{1}{\lambda_1}\sum\limits_{p\ne q}S_n^{pp,qq}u_{pp1}u_{qq1}+\frac{K}{\lambda_1}\left(\sum\limits_jS_n^{jj}(\lambda)u_{jj1}\right)^2-\frac{S_n^{11}u_{111}^2}{\la_1^2}+\frac{5}{4}\sum\limits_{j> 1}\frac{S_n^{jj}u_{jj1}^2}{\la_1^2}\ge0,
	\end{equation}
we also have
	\begin{equation}\label{5.5}
		\left(\sum\limits_iS_k^{ii}u_{ii1}\right)^2=\left([S_k(D^2u)]_{x_1}\right)^2=(f_{x_1}+f_uu_1+f_{u_1}u_{11})^2\le C\la_1^2.
	\end{equation}
	Plugging \eqref{5.4} \eqref{5.5} into \eqref{5.3}, we have
	\begin{align}\label{6.6}
		0\ge&\frac{C\beta}{u}-\frac{\beta S_n^{11}u_1^2}{u^2}+2\sum\limits_{p>1}\frac{S_n^{pp}u_{1pp}^2}{\lambda_1(\lambda_1-\widetilde{\lambda_p})}-\frac{5}{4}\sum\limits_{j> 1}\frac{S_n^{jj}u_{jj1}^2}{\la_1^2}\notag\\
		&+\frac{a}{2}\sum\limits_iS_n^{ii}u_{ii}^2-C(a+\frac{\beta}{-u}+\lambda_1).
	\end{align}
	Note that
	\begin{align*}
		2\sum\limits_{p>1}\frac{S_n^{pp}u_{1pp}^2}{\lambda_1(\lambda_1-\widetilde{\lambda_p})}-\frac{5}{4}\sum\limits_{j> 1}\frac{S_n^{jj}u_{jj1}^2}{\la_1^2}&=\sum\limits_{p>1}\frac{S_n^{pp}u_{1pp}^2}{\lambda_1}\left(\frac{2}{\la_1-\widetilde{\lambda_p}}-\frac{5}{4\la_1}\right)\\
		&=\sum\limits_{p>1}\frac{S_n^{pp}u_{1pp}^2}{\lambda_1}\frac{3\la_1+5\widetilde{\lambda_p}}{4\la_1(\la_1-\widetilde{\lambda_p})}\ge0.
	\end{align*}
	Hence
	\begin{align*}
		0\ge&\frac{C\beta}{u}-\frac{\beta S_n^{11}u_1^2}{u^2}
		+\frac{a}{2}\sum\limits_iS_n^{ii}u_{ii}^2-C(a+\frac{\beta}{-u}+\lambda_1)\\
		\ge&\frac{C\beta}{u}-\frac{\beta S_n^{11}u_1^2}{u^2}+\frac{a}{2}S_n^{11}u_{11}^2-C(a+\la_1).
	\end{align*}
	Using Lemma \eqref{lem2.3} (ii), we have
	$$S_n^{11}\lambda_1^2\ge \theta S_n\lambda_1\ge\theta_0\lambda_1,$$
	Hence,
	$$-\frac{C\beta}{u}+\frac{\beta S_n^{11}u_1^2}{u^2}\ge\frac{a}{2}S_n^{11}\lambda_1^2-C(a+\lambda_1)\ge\frac{a}{4}S_n^{11}\lambda_1^2+\left(\frac{a\theta_0}{4}-C\right)\lambda_1,$$
	choosing $a$ sufficiently large. We complete our proof.

	\bibliographystyle{plain}
	\bibliography{sumref.bib}

\end{document}